\newcommand{\N}{\mathbb{N}}
\newcommand{\R}{\mathbb{R}}
\theoremstyle{plain}
\newtheorem{thm}{Theorem} 
\newtheorem{prop}[thm]{Proposition}
\newtheorem{lem}{Lemma}
\newtheorem{conjecture}[thm]{Conjecture}
\newtheorem*{thm*}{Theorem}
\theoremstyle{definition}
\newtheorem{Def}[thm]{Definition}
\newtheorem{exmp}[thm]{Example}
\theoremstyle{remark}
\title{A real stable extension of the V\'amos matroid polynomial }
\author{Sam Burton}
\address{Department of Mathematics, University of Michigan, Ann Arbor, USA}
\email{sburto@umich.edu}
\author{Cynthia Vinzant}
\address{Department of Mathematics, University of Michigan, Ann Arbor, USA}
\email{vinzant@umich.edu}
\author{Yewon Youm}
\address{Department of Mathematics, University of Michigan, Ann Arbor, USA}
\email{yyoum@umich.edu}
\date{\today}
\newcommand{\addresseshere}{%
  \enddoc@text\let\enddoc@text\relax
}
\begin{document}

\thispagestyle{empty}

\maketitle
\bigskip
\begin{abstract}
In 2004, Choe, Oxley, Sokal and Wagner established a tight connection 
between matroids and multiaffine real stable polynomials. 
Recently,  Br{\"a}nd{\'e}n used this theory and a polynomial coming from the V\'amos matroid 
to disprove the generalized Lax conjecture. 
Here we present a 10-element extension of the V\'amos matroid 
and prove that its basis generating polynomial is real stable (\textit{i.e.}\ that 
the matroid has the half-plane property). We do this via
large sums of squares computations and a criterion for 
real stability given by Wagner and Wei. 
Like the V\'amos matroid, this matroid is not representable over any field
and no power of its basis generating polynomial can be written as the determinant of a
linear matrix with positive semidefinite Hermitian forms. 
\end{abstract}

\section{Introduction}

%
In 2004, Choe, Oxley, Sokal and Wagner \cite{COSW} showed that the 
support of any homogeneous multiaffine real stable polynomial 
is the set of bases of some matroid. 
One multiaffine stable polynomial that has been especially important in the
connections between this theory and other fields has been 
the basis generating polynomial of the V\'amos matroid $V_8$, given by
 \begin{equation}\label{eq:vamosPoly}
 f_8(x) = \sum_{B\in \binom{[8]}{4}\backslash H} \prod_{i\in B}x_i,  
 \end{equation}
 where $H = \{\{1,2,3,4\}, \{1,2,5,6\},\{1,2,7,8\},\{3,4,5,6\},\{5,6,7,8\}\}$. See Figure~\ref{fig:V8V10}.
 Wagner and Wei \cite{WW} showed that this polynomial is real stable. Using the non-representability of the V\'amos matroid, 
 Br{\"a}nd{\'e}n \cite{Bra11} showed that no power of $f_8$ can be written as the determinant of a
linear matrix with positive semidefinite Hermitian forms, thus disproving the generalized Lax conjecture. 
Afterwards this polynomial $f_8$ has been used as test case for many conjectures concerning determinantal representability of 
real stable polynomials \cite{Kummer, KPV, NPT}. 

The aim of this paper is to introduce a family of matroids $V_{2n}$ generalizing the V\'amos matroid and show that the next member, 
$V_{10}$, of this family corresponds to a real stable polynomial. The proof uses similar techniques to \cite{WW} and relies on large 
sums of squares computations. Our main theorem can be stated as the following:

\pagebreak

\begin{thm}\label{thm:main}
Let $\mathcal{B}$ consist of all subsets $B\subset \{1,\hdots, 10\}$ of size $|B|=4$ except for 
\[ \{1,2,3,4\},\{1,2,5,6\},\{1,2,7,8\},\{1,2,9,10\},\{3,4,5,6\},\{5,6,7,8\}, \text{ and }\{7,8,9,10\}. \]
Then $\mathcal{B}$ is the collection of the bases of a matroid and its basis-generating polynomial 
\[ f_{10}(x)\;\; = \;\;\sum_{B\in \mathcal{B}} \; \prod_{i\in B} x_i   \;\; \in \;\;\R[x_1, \hdots, x_{10}]_4\]
is stable. That is, the rank-four matroid $V_{10}$ with bases $\mathcal{B}$ has the half-plane property. 
Furthermore, like $f_8(x)$, no power of $f_{10}(x)$ has a definite determinantal representation. 
\end{thm}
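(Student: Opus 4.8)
The plan is to establish the three assertions of Theorem~\ref{thm:main} in turn.

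\medskip
\emph{That $\mathcal{B}$ is the set of bases of a matroid.} Writing $L_1=\{1,2\}$, $L_2=\{3,4\}$, $L_3=\{5,6\}$, $L_4=\{7,8\}$, $L_5=\{9,10\}$ for the five disjoint $2$-element ``lines'', each of the seven excluded $4$-sets is a union $L_i\cup L_j$ of two distinct lines (the seven pairs $\{i,j\}$ that occur being the four containing $1$ together with the edges of the path $L_2\!-\!L_3\!-\!L_4\!-\!L_5$). Since the $L_i$ are pairwise disjoint, two distinct sets $L_i\cup L_j$ and $L_k\cup L_\ell$ meet in a union of at most one line, hence in at most $2$ of the ten ground-set elements. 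Thus any two of the seven excluded sets intersect in at most $2=4-2$ elements, so $V_{10}$ is sparse paving, and I would invoke (or reprove in one line from the basis-exchange axiom, using that a fixed $3$-set lies in at most one excluded $4$-set) the elementary fact that for any family $\mathcal{N}\subseteq\binom{[n]}{r}$ whose members pairwise intersect in at most $r-2$ elements, $\binom{[n]}{r}\setminus\mathcal{N}$ is the set of bases of a rank-$r$ matroid with circuit--hyperplanes $\mathcal{N}$. This produces the rank-four matroid $V_{10}$.

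\medskip
\emph{That $f_{10}$ is stable.} I would apply the criterion of Wagner and Wei~\cite{WW}: a multiaffine polynomial with nonnegative coefficients is real stable if and only if each of its second Rayleigh differences is nonnegative on $\R^n$, i.e.\ $\Delta_{ij}f:=(\partial_i f)(\partial_j f)-f\,(\partial_i\partial_j f)\geq 0$ on $\R^{10}$ for every pair $i\neq j$. Writing $f_{10}=x_ix_j A+x_i B+x_j C+D$ with $A,B,C,D$ free of $x_i,x_j$, one gets $\Delta_{ij}f_{10}=BC-AD$, a form homogeneous of degree $6$ in the remaining eight variables. The automorphism group of $V_{10}$ — the five within-line transpositions together with the path reversal $L_2\leftrightarrow L_5$, $L_3\leftrightarrow L_4$, a group of order $64$ — collapses the $45$ pairs $\{i,j\}$ into nine orbits (three with $i,j$ in a common line, six with $i,j$ in distinct lines), so only nine Rayleigh differences need be treated. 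For each representative I would exhibit an explicit sum-of-squares certificate for $BC-AD$, obtained by solving a semidefinite feasibility problem and rounding to an exact rational positive semidefinite Gram matrix; this establishes $\Delta_{ij}f_{10}\geq 0$ for every pair, hence the half-plane property. This is the computational heart of the paper and the step I expect to be the real obstacle: a nonnegative degree-$6$ form in eight variables need not be a sum of squares, so one may be forced to clear a denominator (multiply $BC-AD$ by a power of the sum of squares of those eight variables) before an SOS certificate exists, and in any case the Gram matrices are of size $\binom{10}{3}=120$ and must be verified exactly rather than numerically.

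\medskip
\emph{That no power of $f_{10}$ has a definite determinantal representation.} Deleting the elements $9$ and $10$ from $V_{10}$ removes exactly the two excluded sets containing them, $L_1\cup L_5=\{1,2,9,10\}$ and $L_4\cup L_5=\{7,8,9,10\}$, and leaves the rank-four matroid on $\{1,\ldots,8\}$ whose non-bases are $\{1,2,3,4\},\{1,2,5,6\},\{1,2,7,8\},\{3,4,5,6\},\{5,6,7,8\}$; that is, $V_{10}\setminus\{9,10\}=V_8$. Since the V\'amos matroid $V_8$ is representable over no field and representability is closed under taking minors, $V_{10}$ too is representable over no field, in particular not over $\C$. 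Finally I would invoke Br{\"a}nd{\'e}n's theorem~\cite{Bra11}: an identity $f_{10}^{N}=\det(x_1A_1+\cdots+x_{10}A_{10})$ with the $A_i$ positive semidefinite Hermitian forms would force the matroid supported on $f_{10}$, namely $V_{10}$, to be representable over $\C$. As $V_{10}$ is not, no power of $f_{10}$ admits such a representation.
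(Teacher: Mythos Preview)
Your treatment of the first and third assertions is essentially the paper's. For the matroid claim you use the sparse--paving criterion (pairwise intersections of non-bases have size $\le r-2$); the paper instead checks that the seven $4$-sets together with all uncovered $3$-sets form a $3$-partition and invokes the hyperplane characterisation. Both are one-line arguments. For the determinantal claim the paper restricts $f_{10}$ to $x_9=x_{10}=0$, gets $f_8$, and quotes Br\"and\'en's result for $f_8$; your route via non-representability of $V_{10}$ and Br\"and\'en's theorem is equivalent.

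The stability argument is where your plan diverges, and there is a concrete obstruction. You propose to use the equivalence $(1)\Leftrightarrow(2)$ of the Rayleigh criterion and produce an SOS certificate for one Rayleigh difference $\Delta_{ij}f_{10}$ per symmetry orbit, nine in all. But at least one of these nine is \emph{not} a sum of squares: the orbit containing $\{7,8\}$ has $\Delta_{7,8}f_{10}\big|_{x_9=x_{10}=0}=\Delta_{7,8}f_8$, and the paper recalls that $\Delta_{7,8}f_8$ is nonnegative but not SOS; since restriction of an SOS polynomial is SOS, $\Delta_{7,8}f_{10}$ itself cannot be SOS. So your primary step provably fails for that orbit, and the fallback of multiplying by $\bigl(\sum x_k^2\bigr)^N$ is not guaranteed to succeed either (Reznick's theorem needs strict positivity, which $\Delta_{ij}f_{10}$ does not enjoy). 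The paper avoids this by using the \emph{inductive} equivalence $(1)\Leftrightarrow(3)$: it chooses a single pair at each stage, establishes stability of the four associated restrictions and derivatives by descending to known small matroids ($F_7^{-5}$, $F_7^{-6}$, $(F_7^{-6})^*$, uniform and rank-$2$ matroids), and certifies just five Rayleigh differences---$\Delta_{1,3}f(V_{10}\backslash\{5,7\})$, $\Delta_{1,6}f(V_{10}/5\backslash 7)$, $\Delta_{1,7}f(V_{10}/5)$, $\Delta_{7,9}f(V_{10}\backslash 5)$, and finally $\Delta_{5,7}f_{10}$---each chosen so that an SOS certificate actually exists.

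A small side remark: your Gram matrices would be $\binom{8}{3}=56$-dimensional, not $\binom{10}{3}$, since $\Delta_{ij}f_{10}$ is free of $x_i,x_j$ and (being a difference of products of multiaffine forms) is supported on squarefree degree-$3$ monomials in the remaining eight variables.
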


\begin{figure}
\includegraphics[width=2in]{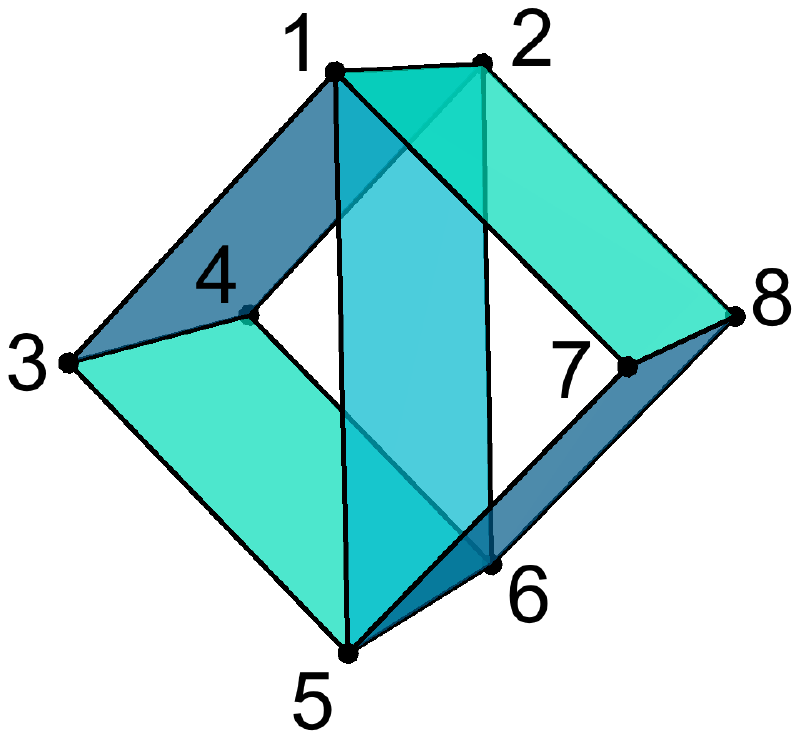} $\;\;\;\;\;\;\;\;\;\;$\includegraphics[width=2in]{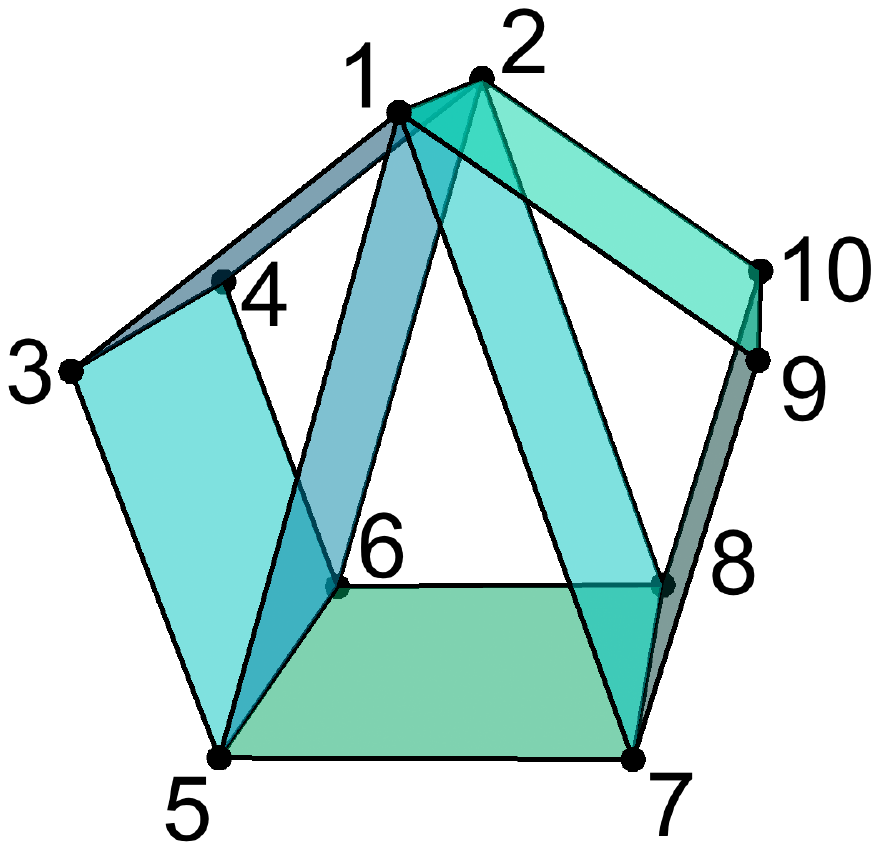} 
\caption{The rank-four V\'amos matroids $V_8$ and $V_{10}$.}\label{fig:V8V10}
\end{figure}

This paper is organized as follows. In Section~\ref{sec:background}, we review the necessary definitions and background on 
matroids and real stable polynomials.  We present a family of extensions $V_{2n}$ of the V\'amos matroid in Section~\ref{sec:extendV},
and in Section~\ref{sec:proof} we prove Theorem~\ref{thm:main}, namely that the basis generating polynomial of $V_{10}$ is stable.

\section{Background} \label{sec:background}

To begin, we go over some background material on matroid theory, 
multiaffine polynomials, and the relation between them.

\subsection{Matroid Basics}
Here we present some relevant definitions and concepts from the theory of matroids. 
For a more comprehensive take, we refer readers to  \cite{Oxley}.

\begin{Def}
A \textbf{matroid M} is an ordered pair $(E, \mathcal{I})$ consisting of a finite set $E$ and a collection $\mathcal{I}$ of subsets of $E$ having the following three properties: 
\begin{enumerate}
\item $\emptyset$ $\in$ $\mathcal{I}$.
\item If $I \in \mathcal{I}$ and $I' \subseteq I$, then $I' \in \mathcal{I}$.
\item If $I$ and $I'$ are in $\mathcal{I}$ and $|I| < |I'|$, then there is an element $e$ of $I'\backslash I$ such that $I \cup \{e\} \in \mathcal{I}$.
\end{enumerate}
The members of $\mathcal{I}$ are the \textbf{independent sets} of $M$, and $E$ is the \textbf{ground set} of $M$.
\end{Def}

A maximal independent set in $M$ is a \textbf{basis} of $M$. We denote the set of bases as $\mathcal{B}(M)$. 
All bases  have the same size, which is called the \textbf{rank} of $M$ and denoted $r(M)$. 
A matroid $(\{1,\hdots, n\}, \mathcal{I})$ is \textbf{representable} over a field $\mathbb{F}$ if there exists vectors 
$v_1, \hdots, v_n \in \mathbb{F}^r$ so that the independent sets $\mathcal{I}$ are exactly those sets $I$ for which 
the vectors $\{v_i: i\in I\}$ are independent. 
We say that two matroids $M = (E, \mathcal{I})$ and $M' = (E', \mathcal{I}')$ are \textbf{isomorphic} 
if there is a bijection $\pi:E\rightarrow E'$ 
for which $\mathcal{I}' = \{\pi(I) \;:\;I\in \mathcal{I} \}$.

Let $M$ be a matroid on $E$ and $\mathcal{B}^*(M)$ be the collection $\{E\backslash B : B \in \mathcal{B}(M)\}$. 
Then  $\mathcal{B}^*(M)$ is the set of bases of a matroid on $E$ of rank $|E|-r(M)$, which we call the \textbf{dual} matroid of $M$ and denote $M^*$. 
The \textbf{deletion} of $e\in E$ from matroid $M$ is the matroid $M \backslash e$ on $E\backslash\{e \}$ with bases 
$\{B : B \in \mathcal{B} \text{ with } e \not\in B\}.$ The \textbf{contraction} of $M$ by is given by $M/e = (M^*\backslash e)^*.$

We picture a rank-$r$ matroid $M$ as a collection of points in $(r-1)$-space and draw a hyperplane through collections 
of $r$ elements that are dependent, \textit{i.e.}\ not bases of $M$. 

\begin{exmp}\label{ex:repMatroid}
Consider the rank-3 matroid $M(A)$ represented by the columns of the matrix
$$A  \ = \  
\begin{pmatrix}
v_1 & v_2 & v_3 & v_4 & v_5
\end{pmatrix}
\ = \ 
\begin{pmatrix}
1 & 0 & 0 & 1 & 0\\
0 & 1 & 0 & 1 & 1\\
0 & 0 & 1 & 0 & 1
\end{pmatrix}.$$
Every size-three subset of $\{1,2,3,4,5\}$ is a basis of $M=M(A)$ except for 
$\{1, 2, 4\}$ and $\{2, 3, 5\}$.
Its deletion at $\{1\}$ is represented by the columns of the $3\times 4$ matrix obtained by deleting the first column of $A$.  
The bases of $M\backslash 1$ are precisely those in $\mathcal{B}(M)$ that do not contain the element $1$. That is, $\mathcal{B}(M\backslash 1) = \{ \{2,3,4\}, \{2,4,5\}, \{3,4,5\} \} $.
See Figure~\ref{fig:ex}.

The bases of the contraction $M/1$ are the bases of $M$ that contained the element $1$ but with the $1$ now removed. That is, 
$\mathcal{B}(M/1) = \{ \{2, 3\}, \{2, 5\}, \{3, 4\}, \{3, 5\}, \{4, 5\} \}$. 
The matroid $M/1$ is represented by the columns of the $2\times 4$ matrix obtained by deleting the first column $v_1$ of $A$ and projecting the others onto $v_1^\perp$.
This represents both $M\backslash 1$ and $M/1$:
$$
M\backslash 1 \ \cong \ 
M\!\begin{pmatrix}
0 & 0 & 1 & 0\\
1 & 0 & 1 & 1\\
0 & 1 & 0 & 1
\end{pmatrix}
\ \  \ \text{ and } \ \ \ 
M/1 \  \cong \  
M\! \begin{pmatrix}
1 & 0 & 1 & 1\\
0 & 1 & 0 & 1
\end{pmatrix}. $$
  \hfill $\diamond$
\end{exmp}

\begin{center}
\begin{figure}\begin{center}
\includegraphics[height=1in]{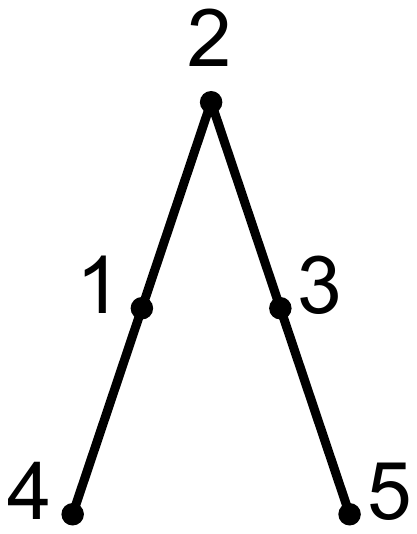} $\;\;\;\;\;\;\;\;\;\;\;\;\;\;$ \includegraphics[height=1in]{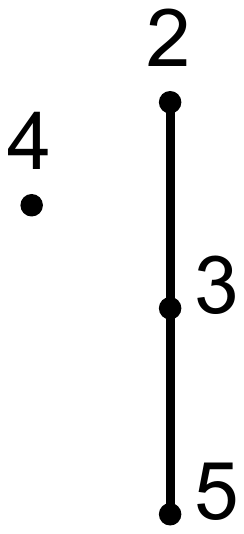} $\;\;\;\;\;\;\;\;\;\;\;\;\;\;$ \includegraphics[height=1in]{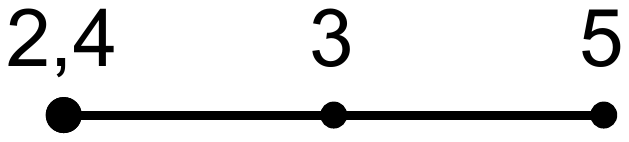}
 \end{center}
\caption{The matroid of Example~\ref{ex:repMatroid} and its deletion and contraction at $\{1\}$.}\label{fig:ex}
\end{figure}
\end{center}

Another important example is the matroid represented by $n$ \textit{generic} vectors in $\R^r$.

\begin{exmp}
For any $r\leq n\in\N$, the \textbf{uniform matroid} of rank $r$ on $n$ elements, $U_{r,n}$, is the matroid on $\{1,\hdots, n\}$
with independent sets $\mathcal{I}=\{I\subset E, |I|\leq r\}$.  It indeed has rank $r$ and its bases are the subsets of $\{1,\hdots, n\}$ 
of size exactly $r$.  Its deletion at any element $e$ is isomorphic to $U_{r, n-1}$ and the contraction $U_{r,n}/e$ is isomorphic to $U_{r-1,n-1}$.
 \hfill $\diamond$
\end{exmp}

\subsection{Polynomials and real stability}
To a matroid $M = (\{1,2,\hdots, n\}, \mathcal{I})$, we can associate its \textbf{basis generating polynomial}
$$f(M)\;\;=\;\;\sum_{B\in\mathcal{B}(M)}\prod_{i\in B}x_i \;\;\;\in\;\; \R[x_1, \hdots, x_n]. $$
This polynomial is homogeneous with degree equal to the rank of $M$.  It is also \textbf{multiaffine}, meaning that every monomial
is square free.   The operations of deletion and contraction correspond nicely to operations on the level of polynomials, 
namely restricting a variable to zero and taking a partial derivative. That is, for any $i\in M$. 
\[f(M\backslash i)=(f(M))|_{x_i=0} \ \ \  \text{and} \ \ \ f(M/i)=\frac{\partial}{\partial x_i}f(M).\]
We are particularly interested in when these polynomials are \emph{stable}.

\begin{Def}
Let $f$ be a real homogeneous polynomial in $\R[x_1,\dots,x_n]$. We say that $f$ is 
 \textbf{stable} if for every choice of real vectors $v \in (\R_{>0})^n$ and $w \in \R^n$, the univariate polynomial 
 $f(tv+w) \in \mathbb{R}[t]$ has only real roots. Equivalently, one can say that $f$ 
has the \textbf{half-plane property} or that $f$ is \textbf{strongly Rayleigh} \cite{Bra07}.
 
We say that a matroid $M$ has the \textbf{half-plane property} if the multiaffine polynomial $f(M)$ does. So a matroid $M$ has the half-plane property if and only if $f(M)$ is stable.\\
 \end{Def}

Choe, Oxley, Sokal, and Wagner \cite{COSW} show that if $f\in \R[x_1, \hdots, x_n]$ is homogeneous, multiaffine, and stable, 
then its \emph{support} (the collection of $I\subset \{1,\dots,n\}$ for which
the monomial $\prod_{i\in I}x_i$ appears in $f$) is the set of bases
of a matroid. One important example of this phenomenon 
comes from representable matroids. 

\begin{exmp} \label{ex:detrep}
Given positive semidefinite matrices $A_1, \hdots, A_n \in \R^{d\times d}_{\rm sym}$, 
the polynomial $f(x) = \det(x_1A_1+ \hdots+ x_nA_n )$ is real stable (if it is not identically zero).  To see this, let $A(x)$ denote $\sum_ix_iA_i$.
For any $v\in (\R_{>0})^n$ and $w \in \R^n$, the matrix $A(v)$ will be positive definite and we can write $A(v) = UU^T$ for 
some real matrix $U$. Then 
\[f(tv+w) \ = \ \det(tA(v)+ A(w)) \ = \ \det(U)^2\det(tI + U^{-1}A(w)U^{-T}).\]
For any $w\in \R^n$, the real symmetric matrix $U^{-1}A(w)U^{-T}$ has all real eigenvalues and thus all the roots of the polynomial $f(tv+w)$ are real. 
Furthermore, if the matrices $A_i$ have rank-one, that is $A_i = v_iv_i^T$ for some vectors $v_i\in \R^d$, then 
$\det(\sum_i x_iA_i)$  can be written as \[\det\bigl(\sum_i x_i v_iv_i^T\bigl)  \ = \ \sum_{I\in\binom{[n]}{d}} \det(v_i:i\in I)^2\prod_{i\in I}x_i.\]
This polynomial is multiaffine and its support will be the bases of the matroid represented by the the vectors $v_1, \hdots, v_n$. \hfill $\diamond$
\end{exmp}

One can also obtain real stable polynomials by taking directional derivatives. 
For a homogeneous, stable polynomial $f \in \R[x_1, \hdots, x_n]$ and any $\lambda\in (\R_{\geq 0})^n$, the
polynomial $\sum_i \lambda_i \partial f/\partial x_i$ is stable as well \cite[Prop. 2.8]{COSW}.

\begin{exmp}\label{ex:uniform}
For $r\leq n$, let $e_{r,n}(x)$ be the $r$th elementary symmetric function 
$\sum_{I\in \binom{[n]}{r}}\prod_{i\in I}x_i$. 
Clearly $e_{n,n}(x) = x_1\cdots x_n$ is stable. This implies that $e_{r,n}(x) = (n-r)\sum_{i=1}^n \partial e_{r+1,n}/\partial x_i$
is stable for all $r$.  Thus the uniform matroid $U_{r,n}$, whose basis generating polynomial is $e_{r,n}$, 
has the half-plane property. \hfill $\diamond$
\end{exmp}

In general, showing the real stability of a polynomial is difficult and not all stable polynomials come from determinants or derivatives. 
Similarly, the relationship between representability of a matroid and half-plane property is not very well understood. 
The V\'amos matroid $V_8$ has the half-plane property but is not representable over any field. 
Moreover, not every matroid is the support of a real stable polynomial \cite[Theorem 6.6]{Bra07}.  
For a large database of matroids and their status, see \cite[Table 2.1]{GL}. 

In \cite{Bra07}, Br{\"a}nd{\'e}n gives a criterion for the real stability of multiaffine polynomials (and hence the half-plane property of matroid) 
using its Rayleigh differences. The \textbf{Rayleigh difference} of a polynomial $f$ at $i, j \in [n]$ is
\begin{equation}\label{eq:Delta}
\Delta_{i,j}f \ \ = \ \ \frac{\partial f}{\partial x_i} \cdot \frac{\partial f}{\partial x_j}-f \cdot \frac{\partial^2 f}{\partial x_i\partial x_j}.
\end{equation}
An extension of Br{\"a}nd{\'e}n's work is following characterization of stability in terms of Rayleigh differences, which appears as Theorem 3.1 of \cite{WS}.

\begin{thm}\label{thm:HPP} 
Let $f\in\R[x_1,\dots,x_n]$ be a multiaffine polynomial with positive coefficients. The following are equivalent. \begin{enumerate}
\item $f$ is stable. \smallskip
\item For all $\{i,j\}\subset\{1,\dots,n\}$ and for all $a\in\R^{n-2}$, $\Delta_{i,j}f(a)\geq0$.  \smallskip
\item Either $n=1$ or there exist some pair of indices $\{i,j\}\subset\{1,\dots,n\}$ such that $f|_{x_i=0}$, $\frac{\partial}{\partial x_i}f$, $f|_{x_j=0}$, and $\frac{\partial}{\partial x_j}f$ are stable and for all $a\in\R^{n-2}$, $\Delta_{i,j}f(a)\geq0$. 
\label{it}
\end{enumerate} 
\end{thm}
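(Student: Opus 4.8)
\textbf{Proof strategy for Theorem~\ref{thm:HPP}.}
The plan is to prove the cycle of implications $(1)\Rightarrow(2)\Rightarrow(3)\Rightarrow(1)$, with the bulk of the real work lying in $(2)\Rightarrow(1)$, which is Br\"and\'en's criterion, and in $(3)\Rightarrow(1)$, which is the new refinement. The implication $(1)\Rightarrow(2)$ is the easy direction: if $f$ is stable then so is every partial derivative $\partial f/\partial x_i$ and every restriction $f|_{x_i=0}$ (the latter by a limiting/Hurwitz argument, sending a variable to $0$ along the positive reals, or directly from the definition of stability by specializing $v,w$), so the data appearing in $(2)$ and $(3)$ is available; and the Rayleigh inequality $\Delta_{i,j}f(a)\geq 0$ for real $a$ is the standard consequence of stability---one substitutes $x_i=x_j=t$ (or uses the two-variable polynomial $g(x_i,x_j)=f(a;x_i,x_j)$ obtained by fixing the other coordinates to $a$), notes that $g$ is stable and multiaffine in two variables, hence of the form $\alpha x_ix_j+\beta x_i+\gamma x_j+\delta$ with a sign condition on the coefficients equivalent to $\alpha\delta\le\beta\gamma$, which is exactly $\Delta_{i,j}f(a)\ge 0$. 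For $(1)\Rightarrow(3)$ one combines $(1)\Rightarrow(2)$ with the closure of stability under restriction and differentiation just mentioned, for \emph{any} choice of $\{i,j\}$.

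The heart of the matter is $(3)\Rightarrow(1)$ (which also subsumes $(2)\Rightarrow(1)$, since $(2)$ for a fixed pair together with stability of $f$ on $\le n-1$ variables---established inductively---implies the hypothesis of $(3)$). I would argue by induction on $n$. The base case $n=1$ is trivial: a univariate homogeneous polynomial with positive coefficients is $cx_1$, which is stable. For the inductive step, fix the pair $\{i,j\}=\{1,2\}$ say, and write $f = x_1x_2\,a + x_1\,b + x_2\,c + d$, where $a,b,c,d\in\R[x_3,\dots,x_n]$ are multiaffine with nonnegative coefficients; here $a = \partial^2 f/\partial x_1\partial x_2$, $d = f|_{x_1=x_2=0}$, and $b,c$ are the mixed pieces. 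The hypotheses of $(3)$ tell us that $f|_{x_1=0}=x_2c+d$, $\partial f/\partial x_1 = x_2a+b$, $f|_{x_2=0}=x_1b+d$, and $\partial f/\partial x_2 = x_1c+d$ are all stable, and that $\Delta_{1,2}f = (x_2a+b)(x_1c+d)-(x_1x_2a+x_1b+x_2c+d)(x_2a+b)\cdot\!$---more precisely $\Delta_{1,2}f = \tfrac{\partial f}{\partial x_1}\tfrac{\partial f}{\partial x_2}-f\tfrac{\partial^2 f}{\partial x_1\partial x_2} = (x_2a+b)(x_1c+d)-(x_1x_2a+x_1b+x_2c+d)a = bd-ad\cdots$ wait, let me just say $\Delta_{1,2}f = bc+ad-\big(\text{cross terms cancel}\big)$; the key point is that it is a polynomial in $x_3,\dots,x_n$ only (the $x_1,x_2$ dependence cancels because $f$ is affine in each), and by hypothesis it is $\ge 0$ on all of $\R^{n-2}$. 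To conclude stability of $f$ I would use Br\"and\'en's characterization in the form: a multiaffine $f$ is stable iff $f|_{x_k=0}$ and $\partial f/\partial x_k$ are stable \emph{and} $\Delta_{k,\ell}f\ge 0$ on $\R^{n-2}$ for all $\ell$ --- but here is the subtlety that makes the theorem nontrivial: we are only assuming the Rayleigh inequality for the \emph{single} pair $\{1,2\}$, not for all pairs. So the real content is: stability of the four ``derived'' polynomials plus one Rayleigh inequality suffices. I would prove this by considering $g(t) = f(tv+w)$ for $v\in\R_{>0}^n$, $w\in\R^n$, and showing it has only real roots; the stability of $f|_{x_1=0}$ and $\partial f/\partial x_1$ controls the behavior as the first coordinate of $w$ runs over $\R$ together with an interlacing argument (Obreschkoff / Hermite--Biehler), and the nonnegativity of $\Delta_{1,2}f$ on $\R^{n-2}$ provides exactly the positive-semidefiniteness needed to push the interlacing through the second variable. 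Concretely, one fixes $x_3,\dots,x_n$ to a real point $a$, gets a two-variable multiaffine stable polynomial (stability in two variables being elementary to check from the coefficient sign condition, which is the Rayleigh inequality), and then lifts by continuity/density and the fact that the limit of stable polynomials is stable or identically zero.

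\textbf{Main obstacle.} The delicate step is precisely the last one: showing that requiring the Rayleigh inequality $\Delta_{i,j}f(a)\ge0$ for only \emph{one} pair $\{i,j\}$---rather than all pairs as in item $(2)$---is enough, once the four rank-reduced polynomials $f|_{x_i=0},\ \partial f/\partial x_i,\ f|_{x_j=0},\ \partial f/\partial x_j$ are already known stable. The point is that those four polynomials are polynomials in all the \emph{other} variables, and their stability encodes (via the inductive hypothesis applied to them) a great many Rayleigh inequalities among the variables $\{1,\dots,n\}\setminus\{i\}$ and $\{1,\dots,n\}\setminus\{j\}$; together these cover every pair except possibly $\{i,j\}$ itself, and the extra single hypothesis $\Delta_{i,j}f(a)\ge0$ fills the remaining gap. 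Making this bookkeeping precise---and invoking Br\"and\'en's theorem (stability $\iff$ all Rayleigh differences nonnegative, for multiaffine polynomials) together with the deletion/contraction closure of stability---is where the argument must be carried out carefully; the rest is routine. I would cite \cite{Bra07} for the equivalence $(1)\iff(2)$ and present $(3)$ as the efficiency improvement, so that in practice one only has to verify one sum-of-squares certificate for $\Delta_{i,j}f$ at each level of the recursion rather than $\binom{n}{2}$ of them.
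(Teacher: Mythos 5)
You should first note that the paper itself gives no proof of Theorem~\ref{thm:HPP}: it is quoted as Theorem 3.1 of \cite{WS}, so the only available comparison is with that cited argument, which your sketch does not reconstruct. The directions you treat as easy are indeed fine: stability is preserved by setting a variable to zero and by differentiation; for fixed $\{i,j\}$ and real values of the remaining variables the bivariate restriction $\alpha x_ix_j+\beta x_i+\gamma x_j+\delta$ is stable exactly when $\beta\gamma-\alpha\delta\geq 0$ (the two-variable case of Br\"and\'en's criterion), giving $(1)\Rightarrow(2)$; and $(2)\Rightarrow(3)$ follows inductively because $\Delta_{k,l}(f|_{x_i=0})=(\Delta_{k,l}f)|_{x_i=0}$ and $\Delta_{k,l}(\partial f/\partial x_i)$ is the leading coefficient of $\Delta_{k,l}f$ viewed as a quadratic in $x_i$. (Two small slips: $\partial f/\partial x_2=ax_1+c$, not $cx_1+d$, and $\Delta_{1,2}f=bc-ad$.)

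The genuine gap is in $(3)\Rightarrow(1)$, precisely at the step you defer as ``bookkeeping.'' Write $f=x_ig+h$ with $g=\partial f/\partial x_i$ and $h=f|_{x_i=0}$. For a pair $\{k,l\}$ with $i\notin\{k,l\}$ one has $\Delta_{k,l}f=\Delta_{k,l}(g)\,x_i^2+Mx_i+\Delta_{k,l}(h)$ for a certain cross term $M$; the inductive hypothesis (or Br\"and\'en's theorem) applied to the stable polynomials $g$ and $h$ only controls the two extreme coefficients, whereas nonnegativity of $\Delta_{k,l}f$ for all real $x_i$ additionally needs $M^2\leq 4\,\Delta_{k,l}(g)\,\Delta_{k,l}(h)$, a compatibility (``proper position'') condition that is not implied by the separate stability of $g$ and $h$. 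Likewise, for $l\notin\{i,j\}$ one computes $\Delta_{i,l}f=g\,\partial_l h-h\,\partial_l g$, a Wronskian whose nonnegativity is again a proper-position statement rather than a consequence of individual stability. So your claim that the four derived polynomials ``cover every pair except $\{i,j\}$'' is false as stated, and deriving exactly these remaining Rayleigh inequalities from the single hypothesis $\Delta_{i,j}f\geq 0$ is the entire content of the cited theorem. The fallback you offer---fix $x_3,\dots,x_n$ at a real point, note the bivariate slice is stable, then ``lift by continuity/density'' via limits of stable polynomials---is a non sequitur: such slices only test points of the polydisc of upper half-planes whose last $n-2$ coordinates are real, and no limiting argument recovers stability of $f$ from them; if it did, the Rayleigh inequality for a single pair alone would imply stability, which is false (e.g.\ $f=(x_1+x_2)(x_3x_4+x_3+x_4+10)$ has $\Delta_{1,2}f\geq 0$ everywhere but is not stable). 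As written, the proposal identifies the difficulty correctly but then restates the theorem rather than proving it.
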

In the proof of Theorem~\ref{thm:main}, we will use the equivalence of (1) and (3) to build up a collection 
of stable polynomials, culminating in our desired polynomial $f_{10}$.

\section{Extended V\'amos Matroids} \label{sec:extendV}

The V\'amos matroid $V_8$ is the rank-four matroid on eight elements whose bases consist of all 4-element subsets of $\{1,\hdots, 8\}$
except for  $\{1,2,3,4\}$, $\{1,2,5,6\}$, $\{1,2,7,8\}$, $\{3,4,5,6\}$, and $\{5,6,7,8\}$.  This matroid is not representable over any field \cite[Example 2.1.25]{Oxley}. 
Wagner and Wei \cite{WW} showed that $V_8$ has the half-plane property, meaning that $f_8$, its basis generating polynomial  \eqref{eq:vamosPoly}, is real stable.  In 2011, Br{\"a}nd{\'e}n \cite{Bra11} showed that no power of $f_8$ has a \emph{definite determinantal
 representation}. That is, for every $k\geq 0$, the polynomial $(f_8)^k$ cannot be written as $\det(x_1A_1+\hdots + x_8A_8)$ with 
positive semidefinite matrices $A_1, \hdots, A_8$. This was the first such example and disproved a conjecture of Helton and Vinnikov \cite{HV}. 

In order to show the real stability of $f_8$, Wagner and Wei use Theorem~\ref{thm:HPP} and write 
the Rayleigh difference $\Delta_{1,3}(f_8)$ as a sum of squares \cite{WW}. On the other hand, the Rayleigh difference 
$\Delta_{7,8}(f_8)$ is nonnegative but not a sum of squares. This can be used to verify that no power of 
$f_8$ has a definite determinantal representation \cite[Ex 5.11]{KPV}. 

Interestingly, there is evidence \cite{Kummer} that the polynomial $f_8$ does not provide a counterexample for a weaker generalization of 
the Lax conjecture. To investigate such conjectures further, it would be useful to have more real stable polynomials for which no power 
has a definite determinantal representation. We propose the following family of matroid polynomials. 

\begin{Def}
For $n\geq 4$, let $\mathcal{H}_{2n}$ denote the collection of subsets of $\{1,\hdots, 2n\}$ of the form  
\[\{1,2, 2k-1,2k\}\text{ or }\{2k-1,2k, 2k+1, 2k+2\}\text{ for }2 \leq k\leq n.\]  We define the \textbf{$2n$-V\'amos matroid}, denoted $V_{2n}$, 
to be the rank-four matroid whose bases are all subsets $B\subset \{1,\hdots, 2n\}$ with $|B|=4$ except for $B\in \mathcal{H}_{2n}$. 
See Figure~\ref{fig:V8V10}. 
We call its basis generating polynomial $f_{2n} = f(V_{2n})$. 
\end{Def}

\begin{prop}
The $2n$-V\'amos matroid defined above is indeed a matroid.
\end{prop}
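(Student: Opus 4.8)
The plan is to verify the basis exchange axiom directly. Recall from \cite{Oxley} that a nonempty family $\mathcal{B}$ of subsets of a finite ground set $E$ is the collection of bases of a matroid on $E$ if and only if it has the exchange property: for all $B_1,B_2\in\mathcal{B}$ and every $x\in B_1\setminus B_2$ there exists $y\in B_2\setminus B_1$ with $(B_1\setminus\{x\})\cup\{y\}\in\mathcal{B}$. Here $E=\{1,\dots,2n\}$ and $\mathcal{B}$ consists of the $4$-subsets of $E$ that avoid the listed family $\mathcal{H}_{2n}$, so we are thinning out the bases of the uniform matroid $U_{4,2n}$; in the end $\mathcal{H}_{2n}$ will be exactly the set of circuit-hyperplanes of $V_{2n}$, making it a sparse paving matroid. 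The one combinatorial fact doing all the work is the following: no $3$-element subset of $E$ is contained in two distinct members of $\mathcal{H}_{2n}$, equivalently, any two distinct members of $\mathcal{H}_{2n}$ meet in at most two elements. (In particular $\mathcal{H}_{2n}$ is an antichain of $4$-sets.)

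First I would prove this fact by a short case analysis using the two shapes of sets in $\mathcal{H}_{2n}$: the ``apex'' sets $A_k=\{1,2,2k-1,2k\}$ for $2\le k\le n$ and the ``consecutive'' sets $B_k=\{2k-1,2k,2k+1,2k+2\}$, the latter occurring only for $k$ with $2k+2\le 2n$. For $j\neq k$ the sets $A_j$ and $A_k$ meet exactly in $\{1,2\}$, because the pairs $\{2j-1,2j\}$ and $\{2k-1,2k\}$ are disjoint. For $j<k$, the sets $B_j$ and $B_k$ meet in $\{2j+1,2j+2\}$ when $k=j+1$ and are disjoint when $k\ge j+2$. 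Finally, since $1,2\notin B_k$, we have $A_j\cap B_k=\{2j-1,2j\}\cap B_k$, which equals $\{2k-1,2k\}$ if $j=k$, equals $\{2k+1,2k+2\}$ if $j=k+1$, and is empty otherwise. In every case the intersection has at most two elements.

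Granting this, the conclusion is immediate. Since $\binom{2n}{4}>|\mathcal{H}_{2n}|=2n-3$ for $n\ge4$ (concretely $\{1,3,5,7\}\in\mathcal{B}$), the family $\mathcal{B}$ is a nonempty collection of $4$-subsets of $E$, so it suffices to check the exchange property. Fix $B_1,B_2\in\mathcal{B}$ and $x\in B_1\setminus B_2$, and put $T=B_1\setminus\{x\}$, a $3$-subset of $E$. If $T\subseteq B_2$ then $B_2=T\cup\{y\}$ for the unique $y\in B_2\setminus T=B_2\setminus B_1$, and $(B_1\setminus\{x\})\cup\{y\}=B_2\in\mathcal{B}$. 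Otherwise $|T\cap B_2|\le2$, so $|B_2\setminus B_1|=4-|T\cap B_2|\ge 2$; choosing distinct $y,y'\in B_2\setminus B_1$, the $4$-sets $T\cup\{y\}$ and $T\cup\{y'\}$ both contain $T$, so by the fact above at most one of them lies in $\mathcal{H}_{2n}$. The other is then a $4$-subset of $E$ outside $\mathcal{H}_{2n}$, hence in $\mathcal{B}$, and provides the required exchange. The only real work is the bookkeeping in the case analysis of the previous paragraph — together with a mild care about which $B_k$ actually fit inside the ground set — so I do not anticipate any genuine obstacle.
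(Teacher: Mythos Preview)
Your proof is correct. Both your argument and the paper's hinge on the same combinatorial observation---that any two distinct members of $\mathcal{H}_{2n}$ share at most two elements---but they deploy it differently. The paper takes a hyperplane-axiom route: it adjoins to $\mathcal{H}_{2n}$ every $3$-subset not contained in any member of $\mathcal{H}_{2n}$, observes that the resulting family is a $3$-partition of $\{1,\dots,2n\}$ (which is exactly where the ``at most two common elements'' fact is needed), and then invokes Proposition~2.1.24 of \cite{Oxley} to conclude this is the hyperplane set of a rank-$4$ matroid. You instead verify the basis exchange axiom directly, which is the standard check that removing a family of $4$-sets with pairwise intersections of size at most $2$ from $U_{4,2n}$ yields a (sparse paving) matroid. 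Your approach is more self-contained and makes the sparse-paving structure explicit; the paper's is shorter once the cited proposition is granted and identifies the hyperplanes along the way.
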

\begin{proof}
We need a little more matroid notation. 
A \textbf{hyperplane} of a matroid $M = (E,\mathcal{I})$ is a subset $H\subset E$ of rank $r(M)-1$ such that for any 
$i\in E\backslash H$, the rank of $E\cup \{i\}$ equals $r(M)$. An \textbf{$m$-partition} on the set $E$ is a collection of subsets 
$\mathcal{T}=\{T_1,\dots,T_k\}$ such that each member of $\mathcal{T}$ has at least $m$ elements and each $m$ 
element subset of $E$ is contained in a unique $T_i$. Proposition 2.1.24 of \cite{Oxley} states that 
any $m$-partition of a set $E$ is the collection of hyperplanes of a rank-$(m+1)$ matroid on $E$.

Consider the union of $\mathcal{H}_{2n}$ and the collection of subsets $I \subset[n]$ of size three that are not contained in 
any element of $\mathcal{H}_{2n}$. This union forms a $3$-partition of $[n]$ and thus it is the collection of hyperplanes of a rank-4 matroid. 
This matroid is exactly $V_{2n}$. 
\end{proof}

Every $2n$-V\'amos matroid has a minor isomorphic to $V_8$, so for $n\geq4$, $V_{2n}$ is non-representable over any field. 
Similarly, for $n\geq 4$ the polynomials $f_{2n}$ all have restrictions equal to $f_8$, 
and so no power of $f_{2n}$ has a definite determinantal representation. We hope that $\{f_{2n}\}$ gives a family 
of real stable polynomials with this property. 

\begin{conjecture} \label{conj:HPP}
For every $n \geq 4$, the matroid $V_{2n}$ has the half-plane property.
\end{conjecture}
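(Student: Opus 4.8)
The plan is to prove the conjecture by induction, removing the two ``newest'' elements of $V_{2n}$ at each step. The cases $n=4$ and $n=5$ are already known: $V_8$ has the half-plane property by Wagner and Wei, and $V_{10}$ by Theorem~\ref{thm:main}. For the inductive step I would fix $n\ge 6$, assume that $V_{2m}$ has the half-plane property for every $4\le m<n$, and apply part~(3) of Theorem~\ref{thm:HPP} to $f_{2n}$ with the pivot pair $\{i,j\}=\{2n-1,2n\}$.

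The computational engine is the decomposition, obtained by sorting the bases of $V_{2n}$ according to how many of $\{2n-1,2n\}$ they contain,
\[ f_{2n}\;=\;f_{2(n-1)}\;+\;(x_{2n-1}+x_{2n})\,e_3\;+\;x_{2n-1}x_{2n}\bigl(e_2-x_1x_2-x_{2n-3}x_{2n-2}\bigr), \]
where $e_2$ and $e_3$ are the elementary symmetric polynomials of degrees $2$ and $3$ in $x_1,\dots,x_{2n-2}$. Writing the right-hand side as $A+(y_1+y_2)B+y_1y_2C$ with $y_1=x_{2n-1}$, $y_2=x_{2n}$, $A=f_{2(n-1)}$, $B=e_3$, $C=e_2-x_1x_2-x_{2n-3}x_{2n-2}$, a one-line computation gives $\Delta_{2n-1,2n}f_{2n}=B^2-AC$, a polynomial in $x_1,\dots,x_{2n-2}$ only. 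The four minor polynomials demanded by Theorem~\ref{thm:HPP}(3) also simplify. On the contraction side, $\partial f_{2n}/\partial x_{2n}=B+y_1C$ is the basis-generating polynomial of $V_{2n}/2n$, a rank-three matroid on $\{1,\dots,2n-1\}$ whose only dependent triples are $\{1,2,2n-1\}$ and $\{2n-3,2n-2,2n-1\}$; since these two ``lines'' meet in the point $2n-1$, this matroid is representable over $\R$ and hence has the half-plane property by Example~\ref{ex:detrep}, and the automorphism of $V_{2n}$ exchanging $2n-1$ and $2n$ handles $\partial f_{2n}/\partial x_{2n-1}$ similarly. On the deletion side, $f_{2n}|_{x_{2n}=0}=f_{2(n-1)}+x_{2n-1}e_3$ is the basis-generating polynomial of the free extension of $V_{2(n-1)}$ by one point in general position (and likewise for $f_{2n}|_{x_{2n-1}=0}$).

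Thus the inductive step reduces to two assertions: (i) the free extension $V_{2(n-1)}+e$ has the half-plane property, and (ii) the Rayleigh-difference inequality $e_3^{\,2}\ge f_{2(n-1)}\cdot\bigl(e_2-x_1x_2-x_{2n-3}x_{2n-2}\bigr)$ holds on all of $\R^{2n-2}$. For (i) I would broaden the induction to the class $\mathcal{F}$ of ``generalized Vámos matroids'' --- all matroids obtained from the $V_{2m}$ by deletions, contractions, and free extensions, each of rank at most four --- and induct on the number of elements inside $\mathcal{F}$ using Theorem~\ref{thm:HPP}(3) again; this class is closed, since the deletions and contractions of a free extension $N+e$ are $(N\backslash j)+e$ and $(N/j)+e$, and the recursion terminates at small representable or uniform matroids, with $V_8$ and $V_{10}$ as the anchor cases. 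Each such step, however, leaves its own Rayleigh-difference inequality, of the shape $e_{r-1}(y)\,f(N/j)\ge e_{r-2}(y)\,f(N\backslash j)$ with one factor elementary symmetric in the variables $y$ indexing the ground set minus $j$.

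The hard part will be establishing the family of inequalities in (ii) --- and their analogues in (i) --- uniformly in $n$. For $n=4$, inequality (ii) is exactly the nonnegativity of $\Delta_{7,8}f_8$, which Wagner and Wei noted is \emph{not} a sum of squares; so there is no hope of a plain sums-of-squares certificate for general $n$. A successful attack would instead need either a Positivstellensatz-type certificate --- a sums-of-squares identity after multiplying through by a fixed positive polynomial such as $x_1\cdots x_{2n-2}$ or $x_1^2+\dots+x_{2n-2}^2$ --- or a recursion expressing $\Delta_{2n-1,2n}f_{2n}$ through $\Delta_{2n-3,2n-2}f_{2(n-1)}$ plus manifestly nonnegative correction terms, or a direct combinatorial argument exploiting the near-uniform chain structure of the non-bases of $V_{2n}$. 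This is where the difficulty of the conjecture is concentrated: the cases $n=4,5$ are settled by large finite sums-of-squares computations, and the real task is to make such a computation work for all $n$ simultaneously.
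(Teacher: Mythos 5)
This statement is Conjecture~\ref{conj:HPP}, which the paper does \emph{not} prove: it is stated as open, with only the cases $n=4$ (Wagner--Wei) and $n=5$ (Theorem~\ref{thm:main}) established and with the remark that the computations become intractable beyond that. So there is no proof in the paper to compare yours against, and your proposal, as you yourself say, is a strategy rather than a proof. The algebra you do supply is correct: sorting bases by their intersection with $\{2n-1,2n\}$ gives $f_{2n}=A+(y_1+y_2)B+y_1y_2C$ with $A=f_{2(n-1)}$, $B=e_3$, $C=e_2-x_1x_2-x_{2n-3}x_{2n-2}$, hence $\Delta_{2n-1,2n}f_{2n}=B^2-AC$, and the deletions $V_{2n}\backslash 2n$ are indeed free extensions of $V_{2(n-1)}$. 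But the entire content of the conjecture is concentrated in the positivity statements you label (i) and (ii) and leave unproved; and your pivot choice $\{2n-1,2n\}$ is the least promising one, since for $n=4$ the resulting inequality is exactly the nonnegativity of $\Delta_{7,8}f_8$, which is not a sum of squares --- the very phenomenon behind the failure of determinantal representability. The paper's proof for $V_{10}$ deliberately pivots on elements taken from \emph{different} pairs ($\{1,3\}$, $\{1,6\}$, $\{1,7\}$, $\{7,9\}$, $\{5,7\}$) precisely so that the Rayleigh differences admit p.s.d.\ Gram certificates; any viable uniform-in-$n$ scheme should probably do the same.

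There is also a concrete error in the step you treat as easy: you claim $V_{2n}/2n$ has the half-plane property ``because it is representable over $\R$, by Example~\ref{ex:detrep}.'' Example~\ref{ex:detrep} produces the stable polynomial $\sum_I \det(v_i:i\in I)^2\prod_{i\in I}x_i$, i.e.\ it shows a representable matroid is the support of \emph{some} stable polynomial (the weak half-plane property); the half-plane property requires the basis generating polynomial with all coefficients equal to $1$ to be stable, and $\R$-representability does not imply this --- the non-Fano matroid is representable over $\R$ yet fails the half-plane property \cite{COSW}. (Your particular rank-$3$ contraction, two $3$-point lines through a common point, may well have the HPP, but that needs an argument; note the paper certifies the analogous rank-$3$ contraction $V_{10}/5$ by yet another SOS computation, not by representability.) Likewise, preservation of the HPP under free extension is not an available closure property, so assertion (i) and each step of your broadened ``generalized V\'amos class'' induction carries its own unproven Rayleigh inequality. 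In short: the reduction is a reasonable way to organize an attack and correctly locates the difficulty, but neither (i) nor (ii) is established, one of the auxiliary justifications is wrong as stated, and the conjecture remains open, exactly as in the paper.
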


This holds for $n=4$ (above) and $n=5$ (Section~\ref{sec:proof}). 
Numerical evidence suggests that $V_{12}$ also has the half-plane property but 
larger computations become intractable. One promising approach is to use more refined sums of squares techniques, e.g. \cite[Ex.7.5.2]{FrankP}.

\section{Proof of Theorem~\ref{thm:main}} \label{sec:proof}

As discussed in Section~\ref{sec:extendV}, $V_{2n}$ defines a matroid and no power of its basis generating polynomial $f_{2n}$ 
has a definite determinantal representation. The remaining content of Theorem~\ref{thm:main} is that the polynomial $f_{10}$ is stable, 
or equivalently, the matroid $V_{10}$ has the half-plane property, which we show in Lemma~\ref{victory} below.  
To prove this we rely heavily on Theorem~\ref{thm:HPP} and inductively show that 
certain derivatives and restrictions are stable. Recall that for a matroid $M$
and its basis generating polynomial $f(M)$, derivation corresponds to matroid contraction 
($\frac{\partial}{\partial x_i}f(M) = f(M/i)$) and restriction corresponds to matroid deletion ($f(M)|_{x_i=0} = f(M\backslash i))$.
We will show that certain deletions and contractions of $V_{10}$ have the half-plane property.

We build off of known results for smaller matroids. 
Choe \textit{et.\ al.}\ show that the following matroids have the half-plane property: 
\begin{itemize}
\item any matroid of rank $2$ \cite[Corollary 5.5]{COSW},\smallskip
\item any uniform matroid $U_{r,n}$, as in Example~\ref{ex:uniform} or \cite[Section~9]{COSW},\smallskip
\item the 7-element matroids $F_{7}^{-5}$,  $F_{7}^{-6}$, and its dual $(F_{7}^{-6})^*$ \cite[A.2.2]{COSW}. See Figure~\ref{fig:Fano}.
\end{itemize}

\begin{center}
\begin{figure}[h]
\begin{center}
\begin{tabular}{ccc}
\includegraphics[height=1.2in]{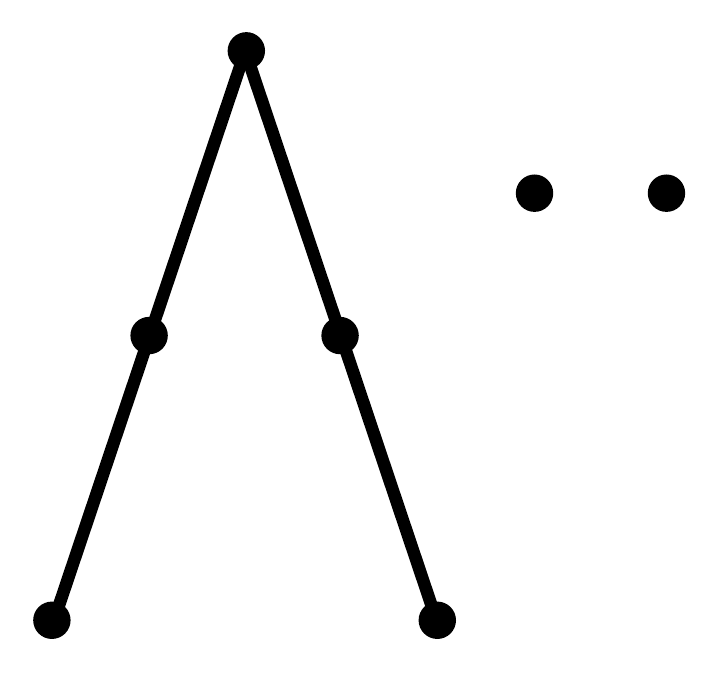}$\;\;\;$ &$\;\;\;$  \includegraphics[height=1.2in]{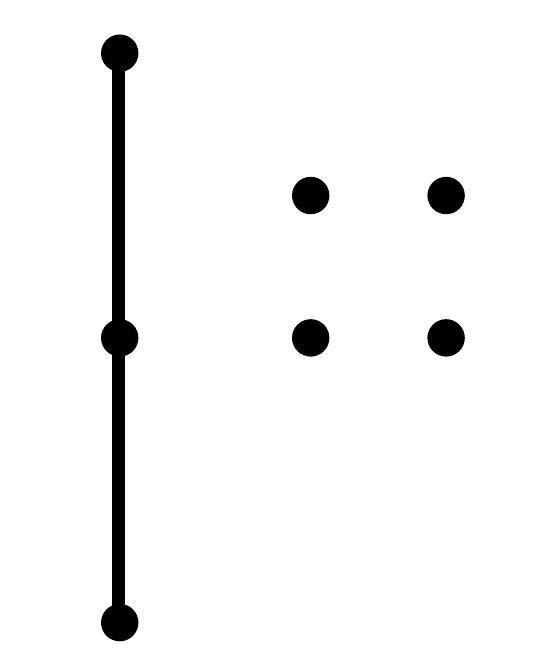} $\;\;\;$ &$\;\;\;$  \includegraphics[height=1.2in]{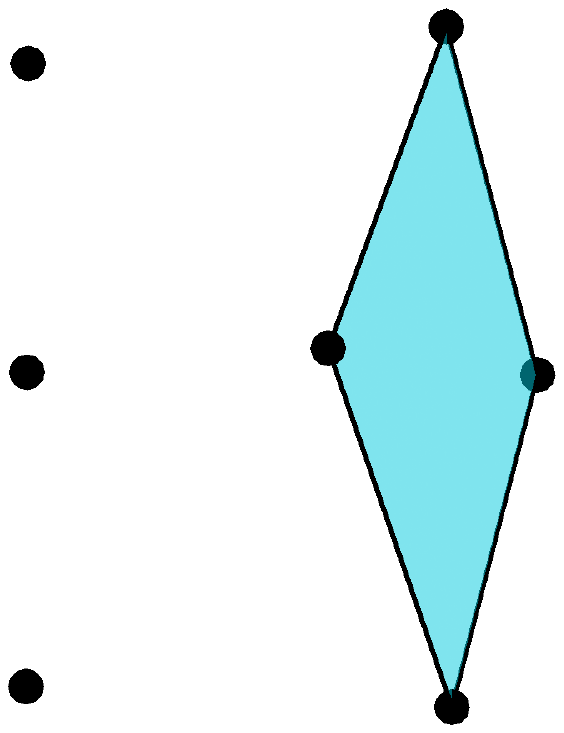}  \medskip \\ 
$F_7^{-5}$ $\;\;\;$ &$\;\;\;$  $F_7^{-6}$ $\;\;\;$ &$\;\;\;$  $(F_7^{-6})^*$
\end{tabular}
 \end{center}
\caption{Some 7-element matroids with the half-plane property.}\label{fig:Fano}
\end{figure}
\end{center}

The other main component of the proof is using Theorem~$\ref{thm:HPP}$ and showing that certain Rayleigh differences \eqref{eq:Delta} are nonnegative. 
In order to do this we write them as sums of squares. In particular, we find positive semidefinite \textit{Gram} matrices for these polynomials.  
For a polynomial $f \in \R[x_1, \hdots, x_n]$, a \textbf{p.s.d.\ Gram representation} of $f$ 
is a pair $(m,G)$ of a vector $m$ of monomials in $x_1, \hdots, x_n$
and a real symmetric positive semidefinite matrix $G$ for which $f = m^TGm$. 

A p.s.d.\ Gram representation of $f$ corresponds to a representation of $f$ as a \textit{sum of squares} and in particular 
implies that $f$ is nonnegative on $\R^n$. To see this, note that we can factor the p.s.d.\ matrix as as sum of real rank-one 
matrices $G = \sum_i g_i g_i^T$. This writes $f$ as a sum of the squares of the polynomials $g_i^Tm$:
\[ f \ \ =\ \  m^TGm \ \ = \ \  \sum_i m^T g_i g_i^T m \ \ = \ \  \sum_i (g_i^Tm)^2. \]
After fixing the monomials $m$, the search for a positive semidefinite Gram matrix $G$ is a \textit{semidefinite program} and can be carried out 
numerically in {\tt Matlab}. In order to find the p.s.d.\ Gram matrices in the appendix, we used the sums-of-squares optimization package 
SOSTOOLS \cite{sostools} in YALMIP \cite{YALMIP} and did some ad hoc rationalization in {\tt Mathematica}.  
We verified that all of the resulting rational matrices are positive semidefinite by calculating (in exact arithmetic) their diagonal minors.  
These computations are available at \url{http://www-personal.umich.edu/~vinzant/SosVamosRayleighDifferences.nb}.

By finding p.s.d.\ Gram representations of various Rayleigh differences, we build up 
a collection of matroids with the half-plane property, ending with $V_{10}$. See Figure~\ref{fig:Lemmata}.

\begin{center}
\begin{figure}[b]
\begin{center}
\begin{tabular}{cc}
 \includegraphics[height=1.7in]{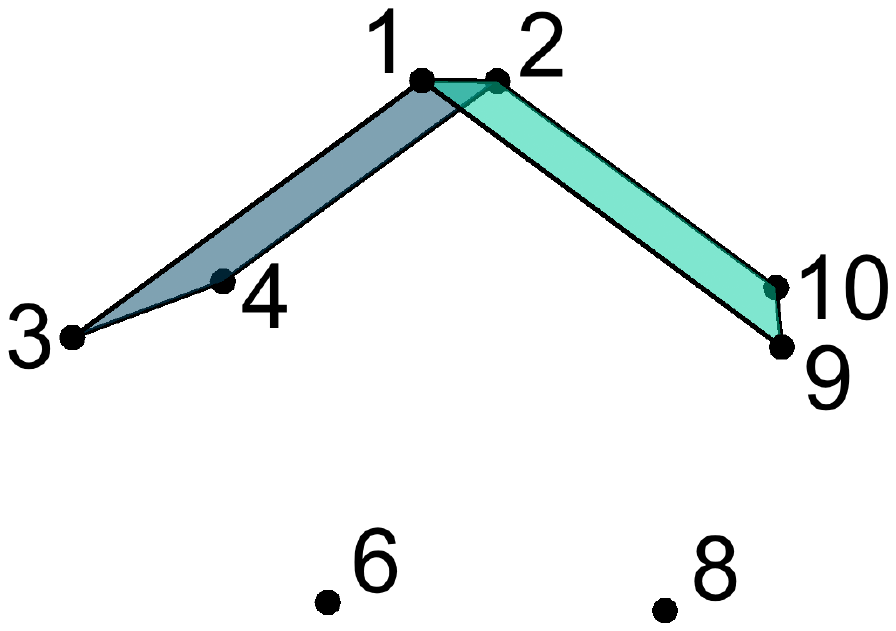}$\;\;\;\;\;$&$\;\;\;\;\;$\includegraphics[height=1.7in]{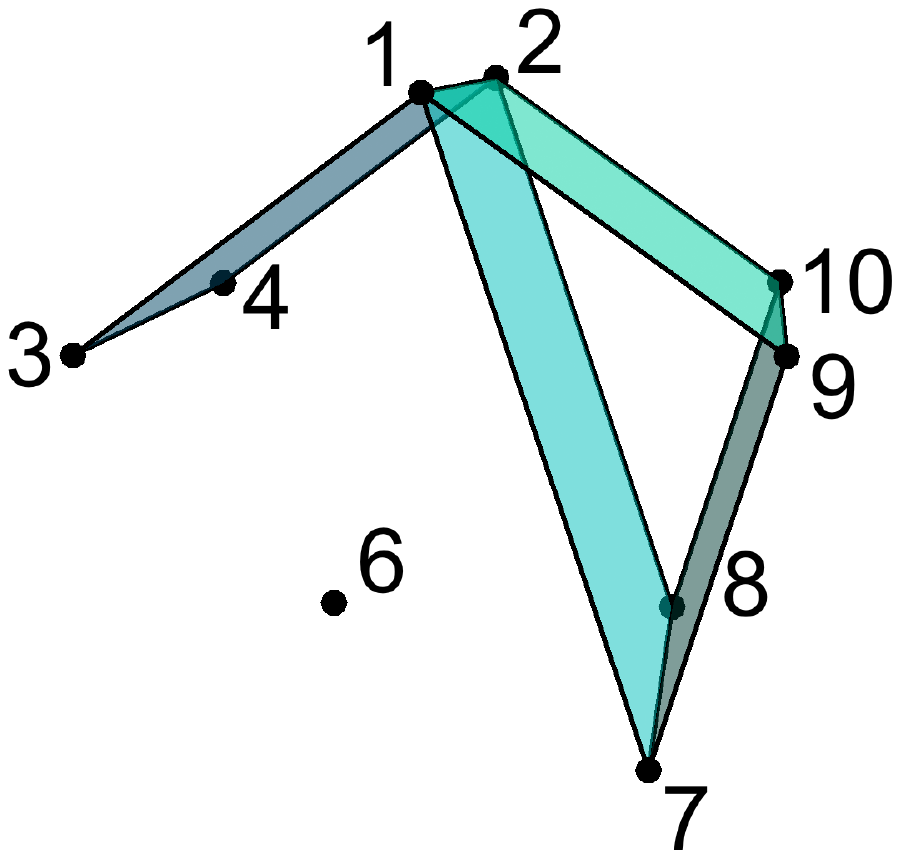} \\
 $V_{10}\backslash\{5,7\}$$\;\;\;\;\;$&$\;\;\;\;\;$ $V_{10}\backslash 5$  \bigskip\\
&\\
\includegraphics[height=1.3in]{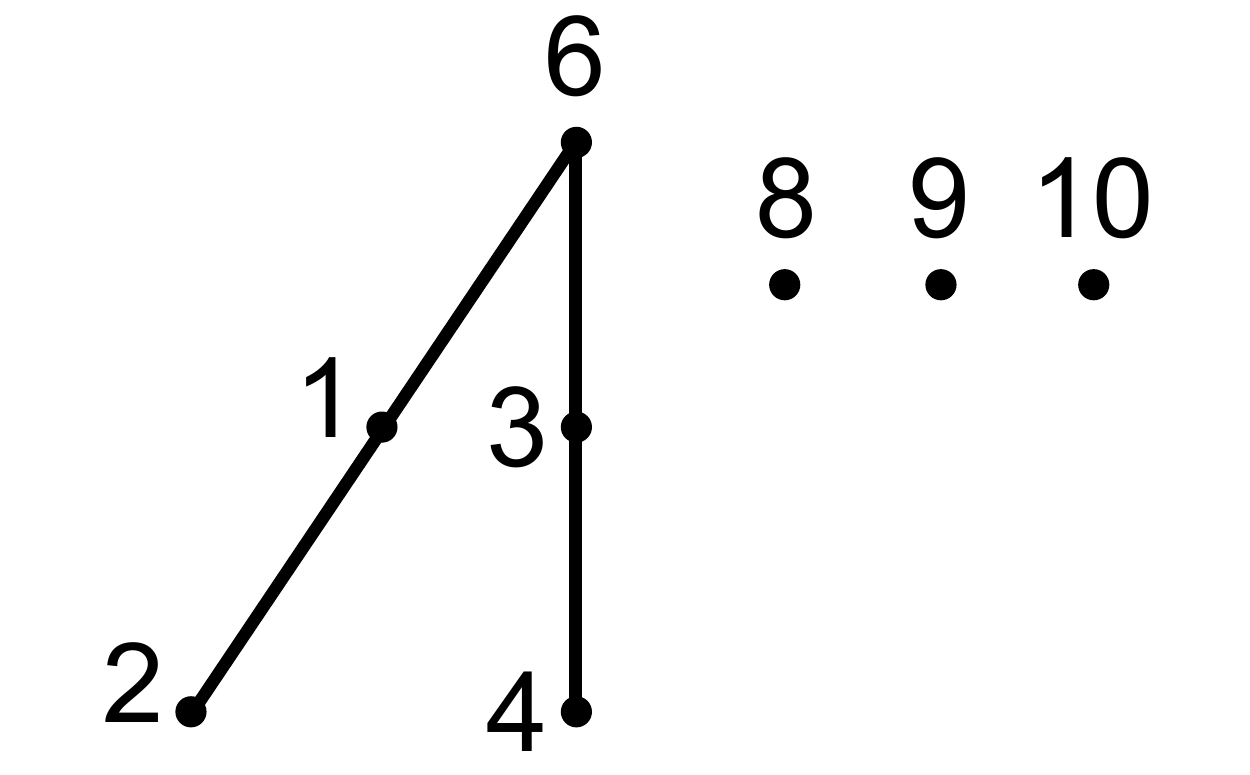} $\;\;\;\;\;$&$\;\;\;\;\;$ \includegraphics[height=1.3in]{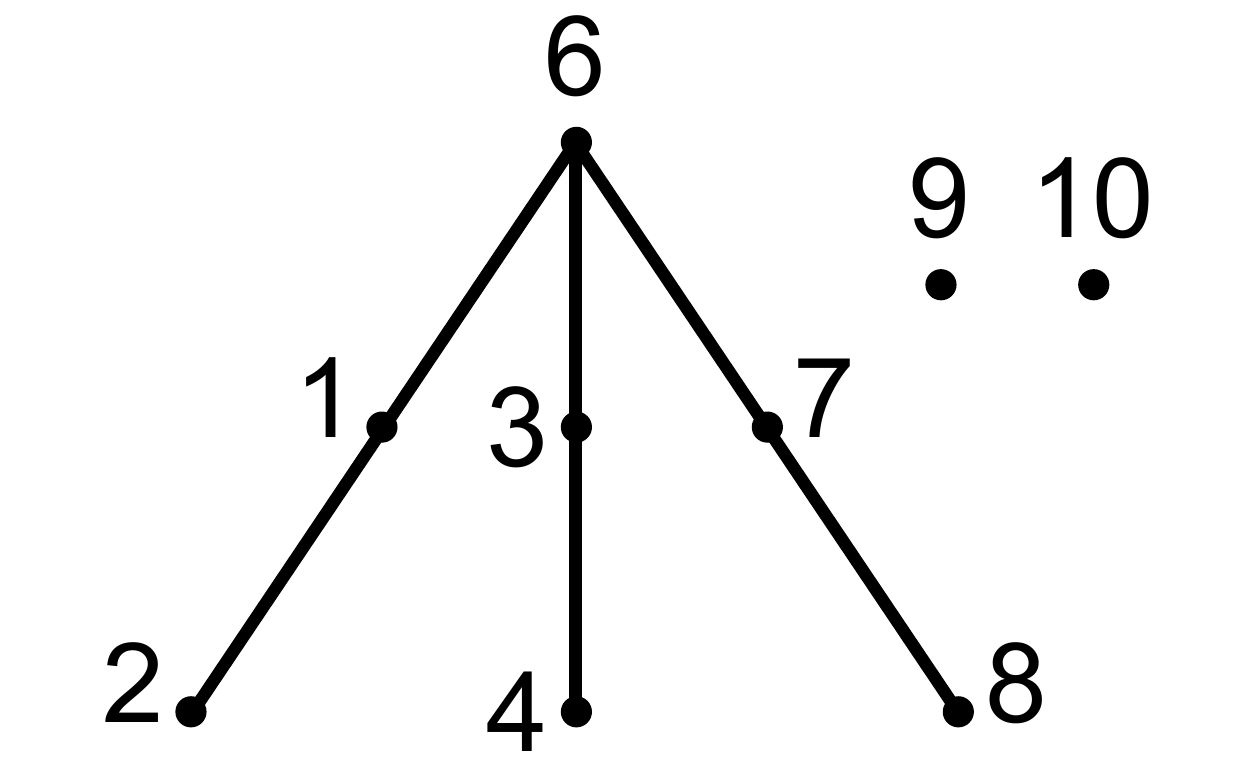}   \medskip \\
$V_{10}/5\backslash 7$  $\;\;\;\;\;$&$\;\;\;\;\;$ $V_{10}/5$\\
 \end{tabular}
 \end{center}
\caption{Some deletions and contractions of $V_{10}$ with the half-plane property.}  \label{fig:Lemmata}
\end{figure}
\end{center}

\pagebreak 
\begin{lem}\label{twoplanes}The matroid $V_{10}\backslash \{5,7\}$ has the half-plane property.\end{lem}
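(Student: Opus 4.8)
The plan is to apply Theorem~\ref{thm:HPP}, and in particular the implication (3)$\Rightarrow$(1), to the basis-generating polynomial $f := f(V_{10}\backslash\{5,7\})$. Recall that $V_{10}\backslash\{5,7\}$ is the rank-four matroid on $\{1,2,3,4,6,8,9,10\}$ whose only non-bases are $\{1,2,3,4\}$ and $\{1,2,9,10\}$ -- two planes sharing the line $\{1,2\}$ -- so $f$ is multiaffine with positive coefficients in these eight variables. Following the Wagner--Wei strategy for $\Delta_{1,3}f_8$ \cite{WW}, I would take the pair $\{i,j\} = \{1,3\}$. By Theorem~\ref{thm:HPP} it then suffices to check that $f|_{x_1=0}$, $\partial f/\partial x_1$, $f|_{x_3=0}$, and $\partial f/\partial x_3$ are all stable, and that $\Delta_{1,3}f \geq 0$ on $\R^6$.

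The four stability claims come from the deletion/contraction dictionary together with the small matroids already known to have the half-plane property. Deleting $1$ destroys both non-bases, so $f|_{x_1=0} = f(V_{10}\backslash\{5,7\}\backslash 1) = f(U_{4,7})$ is stable by Example~\ref{ex:uniform}. Contracting $1$, the polynomial $\partial f/\partial x_1 = f(V_{10}\backslash\{5,7\}/1)$ is the basis generating polynomial of the (simple) rank-three matroid on $\{2,3,4,6,8,9,10\}$ whose only non-bases are $\{2,3,4\}$ and $\{2,9,10\}$, i.e.\ two three-point lines through the common point $2$; up to relabeling this is $F_7^{-5}$, and so it is stable by \cite{COSW}. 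Deleting $3$ leaves only the circuit-hyperplane $\{1,2,9,10\}$, so $f|_{x_3=0}$ is, up to relabeling, $f((F_7^{-6})^*)$; and contracting $3$ leaves only the three-point line $\{1,2,4\}$, so $\partial f/\partial x_3$ is, up to relabeling, $f(F_7^{-6})$. Both are stable by \cite{COSW}.

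It remains to certify $\Delta_{1,3}f \geq 0$ on $\R^6$. A short computation shows that $\Delta_{1,3}f$ is homogeneous of degree six in the remaining variables $x_2, x_4, x_6, x_8, x_9, x_{10}$ and that each of these appears to degree at most two, so its Newton polytope confines a sum-of-squares certificate (essentially) to the degree-three squarefree monomials in those six variables. I would therefore look for a real symmetric positive semidefinite Gram matrix $G$ with $\Delta_{1,3}f = m^{T}Gm$, where $m$ lists these $\binom{6}{3}$ monomials: solve the corresponding semidefinite feasibility problem numerically with SOSTOOLS in YALMIP, rationalize the numerical Gram matrix by hand, and then verify in exact arithmetic that the resulting rational matrix is positive semidefinite by checking the signs of its leading principal minors. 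This writes $\Delta_{1,3}f$ as a sum of squares, hence nonnegative, so all hypotheses of Theorem~\ref{thm:HPP}(3) hold and $f$ is stable; that is, $V_{10}\backslash\{5,7\}$ has the half-plane property.

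The main obstacle is this last step. Nonnegativity of $\Delta_{1,3}f$ is automatic once $f$ is known to be stable, but here it is needed as an \emph{input}, and a nonnegative Rayleigh difference of a stable polynomial need not be a sum of squares -- for instance $\Delta_{7,8}f_8$ is nonnegative but not SOS while $\Delta_{1,3}f_8$ is SOS. So part of the work is checking that the pair $\{1,3\}$ -- rather than, say, a pair lying in different non-bases such as $\{3,9\}$, or a pair lying in a common non-basis -- really does yield an SOS Rayleigh difference, and then extracting a human-checkable exact certificate from the floating-point one; this is where the size of the problem becomes the bottleneck.
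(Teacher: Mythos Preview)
Your argument is essentially identical to the paper's: the same pair $\{1,3\}$, the same identifications of the four minors with $U_{4,7}$, $F_7^{-5}$, $(F_7^{-6})^*$, and $F_7^{-6}$, and the same SOS certificate for $\Delta_{1,3}f$ (the paper supplies the explicit rational Gram matrix $(m_1,G_1)$ in the appendix rather than merely describing the search). One small correction: nonnegativity of the \emph{leading} principal minors does not certify positive semidefiniteness (consider $\bigl(\begin{smallmatrix}0&0\\0&-1\end{smallmatrix}\bigr)$); you need all principal minors nonnegative, which is what the paper checks.
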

\proof 
The contraction of $V_{10} \backslash \{5,7\}$ by $\{1\}$ is isomorphic to the matroid $F_7^{-5}$ and its
contraction by $\{3\}$ is isomorphic to the matroid $F_7^{-6}$, both of which have the half-plane property. 
On the other hand, the deletion of $V_{10} \backslash \{5,7\}$ at $\{1\}$ is isomorphic to the uniform matroid $U_{4,7}$
and the deletion of $V_{10} \backslash \{5,7\}$ at $\{3\}$ is isomorphic to the matroid $(F_7^{-6})^*$, which also have the half-plane property. 
In the appendix, we give a p.s.d.\ Gram representation $(m_1,G_1)$ of the Rayleigh difference $\Delta_{1,3}f(V_{10} \backslash \{5,7\})$, showing it to be nonnegative on $\R^n$. 
Hence, by Theorem~$\ref{thm:HPP}$, $f(V_{10}\backslash \{5,7\}) = f_{10}|_{x_5=x_7=0}$ is stable.  
\endproof

\begin{lem}\label{C58}The matroid $V_{10}/5\backslash7$ has the half-plane property.\end{lem}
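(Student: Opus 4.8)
\textbf{Proof proposal for Lemma~\ref{C58}.}
The plan is to follow exactly the template of Lemma~\ref{twoplanes}, applying the equivalence of conditions (1) and (3) in Theorem~\ref{thm:HPP} to $g := f(V_{10}/5\backslash 7)$, which is a multiaffine polynomial with positive coefficients in the eight variables $x_1,x_2,x_3,x_4,x_6,x_8,x_9,x_{10}$. So I need to (a) identify a pair of indices $\{i,j\}$ such that the four polynomials $g|_{x_i=0}$, $\partial g/\partial x_i$, $g|_{x_j=0}$, $\partial g/\partial x_j$ are all stable, and (b) exhibit a p.s.d.\ Gram representation of the Rayleigh difference $\Delta_{i,j}g$, showing it is nonnegative on $\R^8$. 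Part (a) is a matter of recognizing the four smaller matroids $(V_{10}/5\backslash 7)\backslash i$, $(V_{10}/5\backslash 7)/i$, $(V_{10}/5\backslash 7)\backslash j$, $(V_{10}/5\backslash 7)/j$ as matroids already known to have the half-plane property: these will be (up to isomorphism) rank-$\leq 3$ matroids, uniform matroids, the matroids $F_7^{-5}$, $F_7^{-6}$, $(F_7^{-6})^*$ from \cite{COSW}, or matroids whose half-plane property has been established in a previous lemma (e.g.\ $V_{10}/5$, once Lemma for $V_{10}/5$ is in place, or by a further one-step application of Theorem~\ref{thm:HPP}). I would compute the bases of $V_{10}/5\backslash 7$ explicitly from the definition of $V_{10}$ — contract $5$ (so bases are the size-3 subsets $S$ of $\{1,2,3,4,6,7,8,9,10\}$ with $S\cup\{5\}$ a basis of $V_{10}$) and then delete $7$ — and for a handful of candidate indices $i$ check which deletions/contractions land on known examples; by analogy with Lemma~\ref{twoplanes} I expect a good choice such as $\{i,j\}=\{1,3\}$ to work, with the relevant minors being uniform matroids and deformations of the Fano matroid.

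Once the four stability hypotheses are in hand, the remaining and genuinely substantive step is the nonnegativity of $\Delta_{i,j}g$. Following the paper's methodology, I would fix a monomial vector $m_2$ (of monomials in the eight variables of appropriate degree — $\Delta_{i,j}g$ is homogeneous of degree $2(r-1)=6$ after the two partials, so $m_2$ should consist of degree-3 multiaffine monomials, possibly restricted to those monomials actually supported by $\Delta_{i,j}g$), set up the semidefinite feasibility problem $\Delta_{i,j}g = m_2^T G_2 m_2$ with $G_2 \succeq 0$, solve it numerically with SOSTOOLS/YALMIP, rationalize the resulting Gram matrix by hand in Mathematica, and certify positive semidefiniteness of the rational matrix by checking its leading principal minors in exact arithmetic. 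The certificate $(m_2,G_2)$ goes in the appendix, exactly like $(m_1,G_1)$ for Lemma~\ref{twoplanes}, and the computation file at the URL given in Section~\ref{sec:proof} records the verification. Concluding: since all four restrictions/derivatives are stable and $\Delta_{i,j}g\geq 0$ on $\R^8$, Theorem~\ref{thm:HPP}, part~(3), gives that $g=f(V_{10}/5\backslash 7)$ is stable, i.e.\ $V_{10}/5\backslash 7$ has the half-plane property.

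The step I expect to be the main obstacle is the same one that makes this whole program hard: finding the right monomial basis $m_2$ and producing a p.s.d.\ Gram matrix for $\Delta_{i,j}g$. Whether $\Delta_{i,j}g$ is a sum of squares at all depends delicately on the choice of $\{i,j\}$ — the paper explicitly notes that $\Delta_{7,8}(f_8)$ is nonnegative but \emph{not} SOS, so a wrong choice of index pair can leave us with a nonnegative Rayleigh difference that has no SOS certificate, forcing either a different pair or the more refined SOS techniques alluded to in \cite{FrankP}. So part of the work is a search over index pairs $\{i,j\}$ for which simultaneously (i) the four minors are recognizably half-plane-property matroids and (ii) $\Delta_{i,j}g$ admits an SOS certificate; by the structural similarity of $V_{10}$ to $V_8$ and the success of $\Delta_{1,3}$ in Lemma~\ref{twoplanes}, I am optimistic that a symmetric choice respecting the $\{1,2\}$-through-$\{9,10\}$ structure of $\mathcal{H}_{10}$ works, but confirming it requires the actual SDP computation. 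The rationalization and exact-arithmetic certification are routine but tedious, and the size of $G_2$ (governed by the number of degree-3 multiaffine monomials in eight variables appearing in the support) is what keeps this within reach for $V_{10}$ while $V_{12}$ becomes intractable.
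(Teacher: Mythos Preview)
Your approach is exactly the paper's: apply Theorem~\ref{thm:HPP}(3) to $g=f(V_{10}/5\backslash 7)$ by identifying the four minors and exhibiting an SOS certificate $(m_2,G_2)$ for the Rayleigh difference. Two small corrections: the paper uses the pair $\{i,j\}=\{1,6\}$ rather than $\{1,3\}$ (deletion at $1$ gives $F_7^{-6}$, deletion at $6$ gives $U_{3,7}$, and \emph{all} one-point contractions have rank~$2$ and hence the half-plane property automatically---a simplification you overlooked); and since $V_{10}/5\backslash 7$ has rank~$3$, the Rayleigh difference $\Delta_{i,j}g$ is homogeneous of degree~$4$, so $m_2$ consists of degree-$2$ (not degree-$3$) multiaffine monomials, as the appendix confirms.
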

 \proof All one-point contractions of $V_{10}/5\backslash7$ have rank 2 and therefore the half-plane property.
 The deletion of $V_{10}/5\backslash7$ at $\{1\}$ is isomorphic to $F_7^{-6}$ and its deletion at $\{6\}$ is isomorphic to the
 uniform matroid $U_{3,7}$, both of which have the half-plane property.  In the appendix, 
 we give a p.s.d.\ Gram representation $(m_2,G_2)$ of  $\Delta_{1,6}f(V_{10}/5\backslash7)$, showing it to be nonnegative on $\R^n$. 
  Thus by Theorem~$\ref{thm:HPP}$, $f(V_{10}/5\backslash7) = \frac{\partial}{\partial x_5}f_{10}|_{x_7=0}$ is stable. \endproof

\begin{lem}\label{C79}The matroid $V_{10}/5$ $(\cong V_{10}/7)$ has the half-plane property.\end{lem}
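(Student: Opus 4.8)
The plan is to mirror exactly the structure used in Lemmas~\ref{twoplanes} and~\ref{C58}: apply the equivalence of (1) and (3) in Theorem~\ref{thm:HPP} to $f(V_{10}/5)$, choosing a convenient pair of indices $\{i,j\}$, verifying that the four associated matroids (two deletions and two contractions) have the half-plane property, and then exhibiting a p.s.d.\ Gram representation of the remaining Rayleigh difference $\Delta_{i,j}f(V_{10}/5)$. Note that $V_{10}/5$ is a rank-three matroid on the nine-element set $\{1,2,3,4,6,7,8,9,10\}$, so its one-point deletions have rank three on eight elements and its one-point contractions have rank two and hence automatically have the half-plane property by \cite[Corollary 5.5]{COSW}.

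First I would pick the pair $\{i,j\}$. The natural choice, consistent with the pattern in the preceding lemmas (which peel off elements from the ``chain'' $3,4,5,6,7,8,9,10$), is to take $j=7$, since $V_{10}/5\backslash 7 = V_{10}/5\backslash 7$ was just shown to have the half-plane property in Lemma~\ref{C58}; this handles $f(V_{10}/5)|_{x_7=0}$. For the other three requirements I would compute $V_{10}/5/7$ (a rank-two matroid, hence fine), and for a second index $i$ I would look for one where $V_{10}/5\backslash i$ and $V_{10}/5/i$ are among the matroids already known to have the half-plane property --- candidates are $i=1$, where by analogy with Lemma~\ref{C58} the deletion should be isomorphic to one of $F_7^{-5}$, $F_7^{-6}$, $(F_7^{-6})^*$, $U_{3,8}$, or $U_{4,8}$, or some element whose contraction collapses to a uniform matroid. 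Identifying these minors is a routine but essential bookkeeping step: I would write down $\mathcal{H}_{10}$, delete the pair $\{5,7\}$ in the appropriate (contraction/deletion) sense, and match the resulting rank-three configuration on eight points against the known list, possibly after relabeling. Contractions of rank-three matroids to rank two are always stable, so in practice only the deletions $V_{10}/5\backslash i$ and $V_{10}/5\backslash 7$ need genuine identification, and the latter is Lemma~\ref{C58}.

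Once a suitable pair $\{1,7\}$ (say) is fixed and the four minors are checked, the final step is to certify $\Delta_{1,7}f(V_{10}/5)\geq 0$ on $\mathbb{R}^9$ by producing a vector of monomials $m_3$ and a rational p.s.d.\ matrix $G_3$ with $\Delta_{1,7}f(V_{10}/5) = m_3^T G_3 m_3$, exactly as the appendix does for $G_1$ and $G_2$; positive semidefiniteness of $G_3$ would be verified in exact arithmetic via its diagonal minors, with the SOSTOOLS/YALMIP computation and Mathematica rationalization deposited alongside the others at the stated URL. Theorem~\ref{thm:HPP}, part (3), then gives stability of $f(V_{10}/5) = \frac{\partial}{\partial x_5}f_{10}$, and the isomorphism $V_{10}/5\cong V_{10}/7$ (given by the symmetry of $\mathcal{H}_{10}$ swapping the two ends of the chain) transfers the conclusion to $V_{10}/7$.

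The main obstacle is the Gram-matrix computation: the Rayleigh difference of a rank-three, nine-variable multiaffine polynomial is a degree-four form in nine variables, so the monomial vector $m_3$ is sizable and the semidefinite program is large; worse, there is no guarantee a priori that this particular Rayleigh difference is a sum of squares rather than merely nonnegative (recall $\Delta_{7,8}f_8$ is nonnegative but not SOS). So the real work is (a) choosing the pair $\{i,j\}$ well enough that $\Delta_{i,j}f(V_{10}/5)$ happens to be SOS, and (b) taming the numerics and rationalizing the output into a certifiably p.s.d.\ rational matrix. If no choice of $\{i,j\}$ yields an SOS Rayleigh difference, one would fall back on the more refined SOS techniques mentioned after Conjecture~\ref{conj:HPP}, but I expect one of the symmetric choices to work here, just as it did for $V_{10}\backslash\{5,7\}$ and $V_{10}/5\backslash 7$.
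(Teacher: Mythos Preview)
Your proposal is correct and follows essentially the same route as the paper: apply Theorem~\ref{thm:HPP}(3) to $f(V_{10}/5)$ with the pair $\{1,7\}$, note that the rank-two contractions are automatic, handle the deletion at $7$ via Lemma~\ref{C58}, and certify $\Delta_{1,7}f(V_{10}/5)$ by the p.s.d.\ Gram pair $(m_3,G_3)$ in the appendix. The one detail you left open is the identification of $V_{10}/5\backslash 1$: it is not one of the seven-element Fano variants (it has eight elements), but is in fact isomorphic to $V_{10}/5\backslash 7$ itself, so Lemma~\ref{C58} disposes of \emph{both} deletions at once.
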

 \proof All one-point contractions of $V_{10}/5$ have rank $2$ and so have the half-plane property. 
 The one point deletion $V_{10}/5\backslash1$ is isomorphic to the matroid $V_{10}/5\backslash7$, and both have the half-plane property by Lemma $\ref{C58}$. 
 We write a p.s.d.\ Gram representation $(m_3,G_3)$ of the  Rayleigh difference $\Delta_{1,7}f(V_{10}/5)$ in the appendix, which proves it to be nonnegative. 
Using Theorem $\ref{thm:HPP}$ with $i=1$, $j=7$, we see that $f(V_{10}/5) = \frac{\partial}{\partial x_5}f_{10}$ is stable.  
 \endproof

\begin{lem}\label{planeprism}The matroid $V_{10}\backslash 5$ $( \cong V_{10}\backslash 7)$ has the half-plane property.\end{lem}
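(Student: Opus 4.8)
The plan is to apply Theorem~\ref{thm:HPP}, specifically the equivalence of (1) and (3), to the multiaffine polynomial $f(V_{10}\backslash 5)$ with the pair of indices $\{i,j\}=\{7,8\}$. Thus I need to check that the four polynomials $f(V_{10}\backslash 5)|_{x_7=0}$, $\tfrac{\partial}{\partial x_7}f(V_{10}\backslash 5)$, $f(V_{10}\backslash 5)|_{x_8=0}$, $\tfrac{\partial}{\partial x_8}f(V_{10}\backslash 5)$ are stable and that $\Delta_{7,8}f(V_{10}\backslash 5)$ is nonnegative on $\R^n$; Theorem~\ref{thm:HPP} then yields stability of $f(V_{10}\backslash 5)=f_{10}|_{x_5=0}$.

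For the four stability conditions I would first record two families of automorphisms of $V_{10}$ that preserve $\mathcal{H}_{10}$: transposing the two elements of a single pair $\{2k-1,2k\}$, and the ``path reversal'' $\sigma$ given by $1\mapsto 1,\ 2\mapsto 2,\ 3\mapsto 9,\ 4\mapsto 10,\ 5\mapsto 7,\ 6\mapsto 8,\ 7\mapsto 5,\ 8\mapsto 6,\ 9\mapsto 3,\ 10\mapsto 4$. Since deletion corresponds to restriction and contraction to differentiation, the four minors in question are $V_{10}\backslash\{5,7\}$, $V_{10}/7\backslash 5$, $V_{10}\backslash\{5,8\}$, and $V_{10}/8\backslash 5$. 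The first has the half-plane property by Lemma~\ref{twoplanes}, and transposing $7$ and $8$ shows $V_{10}\backslash\{5,8\}\cong V_{10}\backslash\{5,7\}$, so the third does as well. Because $\sigma$ interchanges the elements $5$ and $7$, it carries $V_{10}/7\backslash 5$ isomorphically onto $V_{10}/5\backslash 7$, which has the half-plane property by Lemma~\ref{C58}; composing $\sigma$ with the transposition of $7$ and $8$ identifies $V_{10}/8\backslash 5$ with $V_{10}/5\backslash 7$ too.

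It then remains to certify that $\Delta_{7,8}f(V_{10}\backslash 5)$, a polynomial in the seven variables $x_1,x_2,x_3,x_4,x_6,x_9,x_{10}$, is nonnegative on $\R^n$. As in the previous lemmas, I would do this by exhibiting in the appendix a p.s.d.\ Gram representation $(m_4,G_4)$ of this Rayleigh difference --- found numerically with SOSTOOLS and YALMIP, rationalized by hand, and then verified to be positive semidefinite in exact arithmetic via its diagonal minors --- which writes $\Delta_{7,8}f(V_{10}\backslash 5)$ as a sum of squares and hence as a nonnegative polynomial. Producing this Gram matrix is the main obstacle: it is the largest of the semidefinite computations in the paper, and the delicate step is the rationalization, since the exact certificate must stay close enough to the numerical optimum to remain positive semidefinite. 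With all hypotheses of part (3) of Theorem~\ref{thm:HPP} verified, we conclude that $f_{10}|_{x_5=0}$ is stable, i.e.\ that $V_{10}\backslash 5$ has the half-plane property.
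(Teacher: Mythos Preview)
Your plan mirrors the paper's: apply part~(3) of Theorem~\ref{thm:HPP}, reduce the four associated minors to Lemmas~\ref{twoplanes} and~\ref{C58} via automorphisms of $V_{10}$, and certify nonnegativity of one Rayleigh difference by an explicit p.s.d.\ Gram matrix. The automorphisms you describe are genuine automorphisms of $V_{10}$ and your identifications of the four minors with matroids already handled are correct.

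The divergence is in the choice of pair. The paper uses $\{i,j\}=\{7,9\}$, and the data $(m_4,G_4)$ in the appendix is a p.s.d.\ Gram representation of $\Delta_{7,9}f(V_{10}\backslash 5)$, not of $\Delta_{7,8}$. This is not a cosmetic difference. In Section~\ref{sec:extendV} the paper recalls that for the V\'amos polynomial $f_8$ the within-pair Rayleigh difference $\Delta_{7,8}(f_8)$ is nonnegative but \emph{not} a sum of squares; that obstruction is precisely why Wagner and Wei used a cross-pair index in their proof for $V_8$. Your indices $7,8$ lie in the same block $\{2k-1,2k\}$ of $V_{10}$, so there is real risk that $\Delta_{7,8}f(V_{10}\backslash 5)$ is likewise not SOS, and you have not produced a certificate---you only assert that one would be found. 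If no p.s.d.\ Gram matrix exists for this polynomial, your argument cannot be completed as written; you would then need either a different nonnegativity certificate or to switch to a cross-pair choice such as the paper's $\{7,9\}$ (the same automorphisms you used handle the corresponding minor identifications). As a minor aside, $(m_4,G_4)$ is not the largest computation in the paper; that is $(m_5,G_5)$ for $\Delta_{5,7}f_{10}$.
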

 \proof The one-point contraction $V_{10}\backslash 5/9$ is isomorphic to $V_{10}\backslash5/7$, and both the half-plane property by Lemma $\ref{C58}$. 
 The one point deletion $V_{10}\backslash\{5,9\}$ is isomorphic to $V_{10}\backslash\{5,7\}$, and both have the half-plane property by Lemma $\ref{twoplanes}$. 
The Rayleigh difference $\Delta_{7,9}f(V_{10}\backslash 5)$ has a p.s.d.\ Gram representation $(m_4,G_4)$ in the appendix, which shows it to be nonnegative. 
 Using Theorem $\ref{thm:HPP}$ with $i=7$, $j=9$, we see that $f(V_{10}\backslash 5) = f_{10}|_{x_5=0}$ is stable and $V_{10}\backslash 5$ has the half-plane property.  
 \endproof

\begin{lem}\label{victory}The matroid $V_{10}$ has the half-plane property.\end{lem}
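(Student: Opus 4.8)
The plan is to apply Theorem~\ref{thm:HPP}, the equivalence of (1) and (3), one last time, with the pair of indices $\{i,j\} = \{5,7\}$. This is the natural choice because the four auxiliary matroids we need to be stable are precisely the ones assembled in the preceding lemmas: $f_{10}|_{x_5=0} = f(V_{10}\backslash 5)$ is stable by Lemma~\ref{planeprism}, $\frac{\partial}{\partial x_5}f_{10} = f(V_{10}/5)$ is stable by Lemma~\ref{C79}, and by the symmetry of $V_{10}$ exchanging the roles of $5$ and $7$ (indeed $V_{10}\backslash 5 \cong V_{10}\backslash 7$ and $V_{10}/5 \cong V_{10}/7$ as noted in those lemmas), $f_{10}|_{x_7=0}$ and $\frac{\partial}{\partial x_7}f_{10}$ are stable as well. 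So three of the four hypotheses of part~(3) of Theorem~\ref{thm:HPP} are already in hand, and $n = 10 \neq 1$.

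The remaining hypothesis is that the Rayleigh difference $\Delta_{5,7}f_{10}(a) \geq 0$ for all $a \in \R^8$. First I would write down $\Delta_{5,7}f_{10} = \frac{\partial f_{10}}{\partial x_5}\cdot\frac{\partial f_{10}}{\partial x_7} - f_{10}\cdot\frac{\partial^2 f_{10}}{\partial x_5 \partial x_7}$ explicitly as a polynomial in $x_1,x_2,x_3,x_4,x_6,x_8,x_9,x_{10}$, and then exhibit a p.s.d.\ Gram representation $(m_5, G_5)$ for it, exactly as was done for the four smaller Rayleigh differences in the previous lemmas. Concretely, I would pick a suitable monomial vector $m_5$, solve the resulting semidefinite feasibility problem numerically in SOSTOOLS/YALMIP, rationalize the solution in Mathematica, and certify positive semidefiniteness of the rational $G_5$ by computing its leading principal minors in exact arithmetic; the data goes in the appendix alongside $(m_1,G_1),\dots,(m_4,G_4)$. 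Once $\Delta_{5,7}f_{10} = m_5^T G_5 m_5$ with $G_5 \succeq 0$, it is nonnegative on all of $\R^8$, and Theorem~\ref{thm:HPP}(3)$\Rightarrow$(1) gives that $f_{10}$ is stable, i.e.\ $V_{10}$ has the half-plane property.

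The main obstacle is the size of the sum-of-squares computation: $\Delta_{5,7}f_{10}$ is a degree-$6$ polynomial in $8$ variables (the two derivatives are multiaffine of degree $3$, their product has degree $6$, and the $f \cdot \partial^2 f$ term likewise), so even after exploiting the grading and the residual $\Z/2$ symmetry swapping $5 \leftrightarrow 7$ that fixes $\Delta_{5,7}f_{10}$, the Gram matrix is large and a naive SDP may be numerically delicate. The practical difficulties are (a) choosing $m_5$ small enough to keep the SDP tractable yet large enough that a p.s.d.\ Gram matrix exists, and (b) rationalizing the floating-point solution without destroying positive semidefiniteness — this typically requires projecting onto the affine constraint space and perturbing into the interior of the p.s.d.\ cone. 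This is exactly the same kind of hurdle faced in Lemmas~\ref{twoplanes}--\ref{planeprism} and in \cite{WW}, and it is handled the same way; there is no new conceptual difficulty, only a heavier computation, which is why the appendix and the accompanying Mathematica notebook carry the real weight of the proof. Note that, unlike $\Delta_{1,3}f_8$ in \cite{WW}, we should not expect every Rayleigh difference of $f_{10}$ to be a sum of squares — indeed the non-SOS Rayleigh difference $\Delta_{7,8}f_8$ restricts from $f_{10}$ — but the particular difference $\Delta_{5,7}f_{10}$ needed here turns out to be SOS, which is what makes this route succeed.
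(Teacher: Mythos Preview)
Your proposal is correct and follows essentially the same approach as the paper: apply Theorem~\ref{thm:HPP}(3)$\Rightarrow$(1) with $\{i,j\}=\{5,7\}$, invoke Lemmas~\ref{C79} and~\ref{planeprism} (and the stated isomorphisms $V_{10}/5\cong V_{10}/7$, $V_{10}\backslash 5\cong V_{10}\backslash 7$) for the four auxiliary polynomials, and certify nonnegativity of $\Delta_{5,7}f_{10}$ via the explicit p.s.d.\ Gram representation $(m_5,G_5)$ given in the appendix. One small technical caveat: to certify positive \emph{semi}definiteness (as opposed to definiteness) you need all principal minors, not just the leading ones, but otherwise your description matches the paper's proof exactly.
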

\proof The one-point contractions of $V_{10}$ at $5$ and $7$ have the half-plane property by Lemma~\ref{C79}. The one point deletions $V_{10}\backslash5 $ and $V_{10}\backslash7$ 
have the half-plane property by Lemma~\ref{planeprism}.  Moreover, the pair $(m_5,G_5)$ in the appendix is a p.s.d.\ Gram factorization of the Rayleigh difference
$\Delta_{5,7}f_{V_{10}}$. Thus $\Delta_{5,7}f_{V_{10}}$ is a sum of squares and certifiably nonnegative. 
Again using Theorem~$\ref{thm:HPP}$ with $i=5$, $j=7$, we see that $f_{10}$ is stable.  \endproof   

This concludes the proof of Theorem~\ref{thm:main}. The representations below are not unique and there may be other Gram representations 
that are more conducive to generalization. 
It would be interesting to find a family of p.s.d.\ Gram representations for Rayleigh differences that could be used to prove Conjecture~\ref{conj:HPP}.

\bigskip

\textbf{Acknowledgements.} We would like to thank David Speyer and Frank Permenter for helpful discussions. 
This paper started as a summer project for the University of Michigan REU Program, which 
supported Sam Burton and Yewon Youm. Cynthia Vinzant was supported by an NSF postdoc DMS-1204447.

\bibliography{VamosRefs}{}
\bibliographystyle{plain}

\vfill

\addresseshere

\appendix
\section*{Appendix: P.s.d.\ Gram representations of Rayleigh differences}

A p.s.d.\ Gram representation of $\Delta_{1,3}(f(V_{10}\backslash\{5,7\})=\Delta_{1,3}(f_{10}|_{x_5=x_7=0})$ is $ m_1,G_1:$
\[
\left(

\right)
}
\]

\end{document}